\documentclass[12pt,a4paper,reqno]{amsart}

\usepackage{float}
\usepackage{amsmath,mathrsfs}
\usepackage{amssymb}
\usepackage{color}
\usepackage{dsfont}
\usepackage[latin1]{inputenc}
\usepackage{graphicx}
\usepackage{hyperref}
\usepackage{enumitem}
\setitemize{leftmargin=*}

\newcommand\NoBlackBoxes{\global\overfullrule0pt}
\NoBlackBoxes

\makeatletter
\let\serieslogo@\relax
\let\@setcopyright\relax


\parindent0em
\textwidth15.5cm

\textheight24.5cm

\topmargin-1cm
\evensidemargin-0.0cm
\oddsidemargin-0.0cm
\parindent 0cm

\newtheorem{definition}{Definition}[section]
\newtheorem{theorem}[definition]{Theorem}

\newtheorem{proposition}[definition]{Proposition}
\newtheorem{rem}[definition]{Remark}

\renewcommand{\P}{{\mathbb{P}}}
\newcommand{\E}{{\mathbb{E}}}

\renewcommand{\epsilon}{\varepsilon}
\renewcommand{\phi}{\varphi}

\begin{document}

\setcounter{page}{1}

\title[]{The sparse Blume-Emery-Griffiths model of associative memories}

\author[Judith Heusel]{Judith Heusel}
\address[Judith Heusel]{Fachbereich Mathematik und Informatik,
University of M\"unster,
Einsteinstra\ss e 62,
48149 M\"unster,
Germany}

\email[Judith Heusel]{judith.heusel@uni-muenster.de}

\author[Matthias L\"owe]{Matthias L\"owe}
\address[Matthias L\"owe]{Fachbereich Mathematik und Informatik,
University of M\"unster,
Einsteinstra\ss e 62,
48149 M\"unster,
Germany}

\email[Matthias L\"owe]{maloewe@math.uni-muenster.de}


\date{\today}

\subjclass[2000]{Primary: 82C32, 60K35, Secondary: 68T05, 92B20}

\keywords{Associative memory, storage capacity, sparse data, exponential inequalities, negative association}

\newcommand{\vep}{\varepsilon}
%

\begin{abstract}
We analyze the Blume-Emery-Griffiths (BEG) associative memory with sparse patterns and at zero temperature.
We give bounds on its storage capacity provided that we want the stored patterns to be
fixed points of the retrieval dynamics. We compare our results to that of other models of sparse neural networks and
show that the BEG model has a superior performance compared to them.
\end{abstract}

\maketitle

\section{Introduction}
The storage capacity of models of associative memories has been
intensively studied in the probability, physics, and, of course, neural networks
literature over the past more than thirty years. Since the seminal work of Hopfield \cite{Hopfield1982} his findings have been proven in a number of papers, see, e.g. \cite{AGS87} for an approach using the replica method, or \cite{MPRV}, \cite{Newman_hopfield}, \cite{Bov98}, \cite{L98}, \cite{talagrand} for methods using exponential estimates and large deviations techniques.
On the other hand, for sparse messages (where sparsity is defined by a small number of active neurons) related, yet different models of associative memories have been proposed by Willshaw \cite{Willshaw}, Amari \cite{Amari1989}, Okada \cite{Okada1996},  or Boll\'e and Verbeiren \cite{BV03}. This interest in associative memories was recently refreshed by Gripon, Berrou and coauthors in \cite{griponb}, \cite{griponc}, \cite{gripond}, \cite{gripone}, \cite{griponf}, also see \cite{HLV15} for a related model. Another reason for a deeper study of such models is that they are intrinsically related to search problems on big databases, see e.g. \cite{yu2015neural},\cite{griponlowevermet_search}.
All these models have in common that their storage capacity is conjectured to be much larger than that of the Hopfield model. The Willshaw model has also been discussed in a number of papers by Palm, Sommer, and coauthors (\cite{Palm1980}, \cite{Palm1996}, \cite{Palm2013} e.g.), with the difference that there the focus is rather on information capacity than on exact retrieval (and that many of the techniques are not rigorous). The conjecture of a superior storage capacity was proven to be true for Amari's model, the Willshaw model, the model from \cite{HLV15}, as well as one version of the models by Gripon and Berrou in \cite{GHLV16}, where also the performances of these models were compared. It was shown in \cite{heuseldiss} that the bounds on the storage capacities obtained in \cite{GHLV16} are indeed sharp. 

However, there is one model which was not considered in \cite{GHLV16}: the so-called Blume-Emery-Griffiths network. This network is a ternary network with input patterns from $\{-1,0,+1\}^N$.
It has been argued that, e.g. for character recognition,
sparse patterns, i.e. patterns with an intensity of less than
100\% for being $\pm 1$ are more realistic than equiprobable
binary ones. In \cite{BS92} a first model for such ternary
networks was introduced and studied. The authors come to the conclusion that
their model of a neural network can store about $\frac N{\gamma p
\log N}$ patterns, where $p$ is the activity of the patterns, i.e.
the probability that a fixed spin of a fixed pattern is different
from zero. This is in agreement with the findings in \cite{GHLV16}, that
the storage capacity of the networks increases, when the
activity is low.

On the other hand, it has been argued that an optimal
Hamiltonian, guaranteeing the best retrieval properties for neural
networks with multi-state neurons can be achieved by maximizing
the mutual information content of the networks (see e.g.
\cite{DK00}, \cite{BV03}). In this way, for two
state neurons the Hopfield model is retrieved while for the three
state problem described above one obtains the BEG model, that was originally introduced as a spin glass model to study phase separation of liquid He$^3$-He$^4$ mixtures. In \cite{BCS03} the
non-rigorous replica method is used to study the storage capacity
of such networks.

In \cite{LV_BEG} it has been rigorously proven that the BEG has a storage capacity for low activity patterns that outperforms that of the Hopfield model as well as that of the ternary model introduced in \cite{BS92}, if the activity does not depend on the system size.

The aim of the present note is to discuss the BEG model for very sparse data, i.e. data where the activity of the $N$ neurons is such that only $\log N$ of them are active in the mean. This is precisely the setup studied in \cite{GHLV16}. It will turn out that we need to adjust the model to this degree of sparsity, but also that after this adjustment the BEG model has a higher storage capacity than all the models discussed in \cite{GHLV16}.

The present note is organized in the following way. In Section 2 we will give a brief description of the situation we are in. We will also introduce two versions of the sparse BEG model. In Section 3 we will show that without adjustment the model proposed in \cite{LV_BEG} is not able to store a large amount of very sparse data. In Section 4 we will show that after the adjustment the model has a large storage capacity. We will also compare its capacity with other models of associative memories for sparse data analysed in \cite{GHLV16}.

\section{The Setup}
In this section we will describe the model we are interested in as
well as the notion of storage capacity we use.
An associative memory will always be a model operating on $N$ neurons,
$V=\{1, \ldots, N\}$. In this memory we want to store $M= M(N)$
patterns $(\xi_i^\mu)_{i=1,\ldots, N}^{\mu=1,\ldots, M}$,
which are taken as random elements from $\{-1,0,+1 \}^N$. The underlying probability
distribution of these patterns will be such that makes all the random variables
$\xi_i^\mu$ independent and identically distributed with
\begin{equation}
1-p=\P(\xi_i^\mu=0)\quad \mbox{ and }\quad
\P(\xi_i^\mu=1)=\P(\xi_i^\mu=-1)=\frac p 2.
\end{equation}
Here, different from the situation analyzed in \cite{LV_BEG} we will choose for the rest of the paper $p=p_N = \frac{\log N}N$, which is about the maximum the degree of sparsity one can allow for without obtaining patterns that are identically 0. As $p$ depends on $N$
we need to take a triangular array $(\xi^{\mu,N})$ of patterns rather than a sequence. However, as this detail does not play any role, we will suppress this dependency on $N$.

Correspondingly, each neuron carries a spin
$\sigma_i, i=1,\ldots, N$ that also takes its values in
$\{-1,0,+1 \}$. We will define the original BEG associative memory as in \cite{LV_BEG} by a dynamics on the spin space
$\{-1,0,+1 \}^N$.
This is given by the following
updating rule: update the spins $\sigma_i$ asynchronously at random using the dynamics
\begin{equation}\label{BEG2}
\tilde{T}_i(\sigma):=\text{sgn}\left(S_i(\sigma)\right)\Theta\left(\big\vert S_i(\sigma)\big\vert+\theta_i(\sigma)\right).
\end{equation}
Here $\Theta$ is the Heavyside-function (defined by $\Theta(x)=\mathds{1}_{[0,\infty)}(x)$),
$$ \theta_i(\sigma)=\sum_{j \neq i}K_{ij} \sigma^2_j ,\ \mbox{where}\ K_{ij}=\frac 1{(1-p)^2} \sum_{\mu=1}^M
\eta_i^\mu \eta_j^\mu,\ \mbox{and}\  \eta_i^\mu=
(\xi_i^\mu)^2-p,$$ and $S_i$ is the external field defined by
$$
S_i(\sigma):=\sum_{j\neq i}J_{ij}\sigma_j=\sum_{j\neq i}\sum_{\mu=1}^M\xi_i^\mu\xi_j^\mu\sigma_j \quad
\mbox{with }
J_{ij}:=\sum_{\mu=1}^M\xi^\mu_i\xi^\mu_j,\, i\neq j,\, i,j\in\lbrace1,\ldots,N\rbrace.
$$
Given this dynamics, we introduce the notion of storage capacity: To this end
the dynamics $T:=(T_i)_{i=1, \ldots, N}$ is taken as the
retrieval dynamics of the BEG memory, i.e. given
an input $\sigma$ the network will ``associate'' this input with
that pattern, that is found by (possibly many iterates of) $T$.
A minimum requirement for such an associative memory
is that the patterns themselves are stable under $T$, i.e. $T(\xi^\mu)=\xi^\mu$.
The storage capacity in this concept is defined
as the greatest number of patterns $M:=M(N)$ such that a randomly chosen
pattern $\xi^{\mu}$ is stable (of course,
this number depends on the randomly chosen patterns, such that in
the sequel we will speak about numbers $M(N)$ such that
with probability converging to one a pattern is
stable).

As we will see, the model as defined so far (and in \cite{LV_BEG}) does not have an impressive storage capacity, if we do not adapt it to the sparsity of the patterns. We will hence introduce a new dynamics of a similar form, including additionally a threshold term: the $i-$th component of $T$ assigns to neuron $i$ the value
\begin{equation}\label{BEG3}
T_i(\sigma):=\text{sgn}\left(S_i(\sigma)\right)\Theta\left(\big\vert S_i(\sigma)\big\vert+\theta_i(\sigma)-\gamma\log(N)\right).
\end{equation}
We will show that this last model outperforms all the models discussed in \cite{GHLV16}.

\section{Stability of the Stored Patterns in the Original BEG Model}

Let us first study the original BEG model as given by \eqref{BEG2} in our extremely sparse situation with $p_N=\log(N)/N$ and see that we indeed need to make some adjustment, to obtain a satisfactory result. To this end recall, that the storage capacity of all the models discussed in \cite{GHLV16}, thus Willshaw's model, Amari's model, as well as the model by Gripon and Berrou and the associative memory introduced in \cite{HLV15} share a memory capacity of $M=\alpha N^2/\log(N)^2$ messages (or patterns) with
different values for $\alpha$ for the different models. Hence assume that $M=\alpha N^2/\log(N)^2$. We will show that for the model defined by \eqref{BEG2} a randomly chosen message $\xi^\mu$ is not stable with a probability bounded away from zero. In fact, a randomly chosen inactive neuron $i$ of $\xi^\mu$ (i.e. $\xi_i^\mu=0$) is mapped to a non-zero value with positive probability.
The BEG model as in \eqref{BEG2} does thus not work well with this grade of sparsity without being modified.
\begin{theorem}
In the original BEG model \eqref{BEG2} with $M=\alpha N^2/\log(N)^2$ patterns, the stored patterns are instable with positive probability: that is,
\begin{align*}
\liminf_{N\rightarrow\infty}\mathbb{P}\left(\tilde{T}(\xi^\mu)\neq\xi^\mu\right)>0
\end{align*}
for any fixed but arbitrary $1\leq\mu\leq M$.
\end{theorem}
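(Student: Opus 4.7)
The strategy is to show instability at a single fixed inactive site $i$. On the event $\{\xi_i^\mu=0\}$, which has probability $1-p\to 1$, the value $\tilde{T}_i(\xi^\mu)=0$ forces $|S_i(\xi^\mu)|+\theta_i(\xi^\mu)<0$, hence in particular $\theta_i(\xi^\mu)<0$. Therefore
\begin{equation*}
\mathbb{P}(\tilde{T}(\xi^\mu)\neq\xi^\mu) \;\geq\; (1-p)\,\mathbb{P}\bigl(\theta_i(\xi^\mu)\geq 0 \,\bigm|\, \xi_i^\mu=0\bigr),
\end{equation*}
and it suffices to prove that the right-hand side stays bounded away from zero; I will in fact show it tends to $1/2$.

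\textbf{Signal vs.\ noise.} Let $A_\mu:=\sum_{j\neq i}(\xi_j^\mu)^2\sim\mathrm{Bin}(N-1,p)$, so that $A_\mu=(1+o(1))\log N$ with high probability. Substituting $\eta_i^\mu=-p$ into the definition of $\theta_i$ and splitting off the $\nu=\mu$ summand, and using the identity $(\xi_j^\mu)^2\bigl((\xi_j^\mu)^2-p\bigr)=(1-p)(\xi_j^\mu)^2$, the $\nu=\mu$ contribution collapses to $-pA_\mu/(1-p)$, which is of order $p^2N=(\log N)^2/N\to 0$. The remainder
\begin{equation*}
R:=\frac{1}{(1-p)^2}\sum_{\nu\neq\mu}\eta_i^\nu Z_\nu,\qquad Z_\nu:=\sum_{j\neq i}\eta_j^\nu(\xi_j^\mu)^2,
\end{equation*}
is, conditionally on $\xi^\mu$, a sum of independent mean-zero random variables with total conditional variance $(M-1)p^2 A_\mu/(1-p)^2\sim\alpha\log N$. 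So the noise has standard deviation of order $\sqrt{\log N}$, dwarfing the $o(1)$ signal.

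\textbf{Conditional CLT.} I verify Lyapunov's condition for $R$ given $\xi^\mu$. The summands $W_\nu=\eta_i^\nu Z_\nu/(1-p)^2$ satisfy $\mathbb{E}|\eta_i^\nu|^3\leq p$; writing $Z_\nu=B_\nu-A_\mu p$ with $B_\nu\sim\mathrm{Bin}(A_\mu,p)$ and noting that $A_\mu p\to 0$, one obtains $\mathbb{E}[|Z_\nu|^3\mid\xi^\mu]=O(A_\mu p)$. Hence $\sum_{\nu\neq\mu}\mathbb{E}[|W_\nu|^3\mid\xi^\mu]=O(Mp^2A_\mu)=O(\log N)$, while the sum of conditional variances to the $3/2$ is $\Theta((\log N)^{3/2})$. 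The Lyapunov ratio is $O((\log N)^{-1/2})\to 0$. Thus, on the event that $A_\mu=(1+o(1))\log N$ (which has probability tending to one), the conditional law of $R/\sqrt{\alpha\log N}$ given $\xi^\mu$ converges weakly to $\mathcal{N}(0,1)$. Combined with the vanishing signal, this gives $\mathbb{P}(\theta_i\geq 0\mid \xi_i^\mu=0)\to 1/2$, and the theorem follows with $\liminf\geq 1/2$.

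\textbf{Main obstacle.} The delicate point is that the inner sums $Z_\nu$ are themselves \emph{not} asymptotically Gaussian: since $A_\mu p\to 0$, $B_\nu$ behaves like a sparse Bernoulli and $Z_\nu$ is almost always equal to $-A_\mu p$ with an occasional jump of size $\sim 1$. One therefore cannot CLT each coordinate first and then treat $R$ as a linear combination. Gaussianity is restored by the outer sum over the $M=\Theta(N^2/(\log N)^2)$ independent pattern indices: the product structure $\eta_i^\nu Z_\nu$ keeps the absolute third moment of each summand small enough ($O(p^2A_\mu)$) for Lyapunov's ratio to vanish. Apart from this, the argument is essentially bookkeeping.
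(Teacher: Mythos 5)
Your proposal is correct in substance and rests on the same core idea as the paper's proof: at an inactive site $i$ the Heaviside factor fires as soon as $\theta_i(\xi^\mu)\ge 0$, and the self-interaction ($\nu=\mu$) part of $\theta_i$ is only $-pA_\mu/(1-p)=O(\log(N)^2/N)$, so everything reduces to showing that the mean-zero cross term is nonnegative with non-vanishing probability. The implementation, however, is genuinely different. The paper conditions on the number $k\approx\log N$ of active sites of $\xi^1$ and on $U(N)=m$ (the number of other patterns in which the chosen inactive neuron is active), decomposes the cross term through the counting variables $V$ and $W$ as $V-Vp-Ukp-Wp+\alpha k$, and applies Berry--Esseen twice to exhibit an explicit event (a moderate positive fluctuation of $V$, a moderate negative one of $W$) of probability bounded below on which the sum exceeds $1$; this yields a positive but unspecified liminf. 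You instead condition on all of $\xi^\mu$, treat the noise as a single sum of i.i.d.\ mean-zero terms $\eta_i^\nu Z_\nu/(1-p)^2$, and verify Lyapunov's condition (third absolute moments summing to $O(Mp^2A_\mu)=O(\log N)$ against the $3/2$ power of the conditional variance, which is $\Theta((\log N)^{3/2})$); this is cleaner and gives the sharper conclusion that the probability tends to $1/2$ rather than merely being bounded away from $0$. Your diagnosis that the Gaussian behaviour comes from the outer sum over the $M$ patterns, not from the sparse inner sums $Z_\nu$, is exactly right and is what the paper's Berry--Esseen computations for $V$ and $W$ exploit implicitly as well.

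One caveat: your opening implication ``$\tilde T_i(\xi^\mu)=0$ forces $\vert S_i(\xi^\mu)\vert+\theta_i(\xi^\mu)<0$'' is not literally true, since with $\sgn(0)=0$ one also has $\tilde T_i(\xi^\mu)=0$ whenever $S_i(\xi^\mu)=0$, regardless of the Heaviside factor. You therefore need to rule out $S_i(\xi^\mu)=0$: conditionally on the active set of $\xi^\mu$, $S_i(\xi^\mu)$ is a sum of i.i.d.\ symmetric signs whose number of nonzero summands is Binomial with mean $\approx\alpha\log N\to\infty$, so $\mathbb{P}\left(S_i(\xi^\mu)=0\right)=O(1/\sqrt{\log N})\to0$ and your lower bound survives with liminf still equal to $1/2$. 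The paper glosses over the same point (it concludes activation from the Heaviside factor alone), so this is a one-line repair rather than a flaw in the strategy, but as stated your reduction inequality needs that extra subtraction.
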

\begin{proof}
We consider message $\xi^1$. Without without loss of generality, $\xi^1$ has $k$ active neurons (i.e. $k$ of the $\xi_i^\mu$ are not $0$) and these are the first neurons $1,\ldots,k$. The corresponding event is denoted by $\mathcal{Z}_k$. It suffices to show that at least one of the inactive neurons is activated with non-vanishing probability. In fact, this is even true for an arbitrary neuron $i$, $i>k$.
To see this, we observe that an inactive neuron $i>k$ is mapped to a non-zero value if the Heaviside-function of $\vert S_i(\xi^1)\vert+\theta_i(\xi^1)$ is 1: this happens if
$\vert S_i(\xi^1)\vert+\theta_i(\xi^1)\geq0
$
which is equivalent to
$$\Big\vert\sum_{\mu=1}^M\sum_{j\neq i}\xi^1_j\xi^\mu_i\xi^\mu_j\Big\vert\geq-\frac{1}{(1-p)^2}\sum_{\mu=1}^M\sum_{j\neq i}\left(\xi^1_j\right)^2\eta^\mu_i\eta^\mu_j.
$$
We will show that with non-vanishing probability,
$\frac{1}{(1-p)^2}\sum_{\mu=1}^M\sum_{j\neq i}\left(\xi^1_j\right)^2\eta^\mu_i\eta^\mu_j>0$
which implies $\Theta(\vert S_i(\xi^1)\vert+\theta_i(\xi^1))=1$.
For the analysis
we use our assumption on the active neurons of $\xi^1$:
\begin{align*}
-&\frac{1}{(1-p)^2}\sum_{\mu=1}^M\sum_{j\neq i}\left(\xi^1_j\right)^2\eta^\mu_i\eta^\mu_j=\frac{kp}{1-p}-\frac{1}{(1-p)^2}\sum_{\mu=2}^M\sum_{j\leq k}\eta^\mu_i\eta^\mu_j
\end{align*}
which is negative if
$
kp(1-p)<\sum_{\mu=2}^M\sum_{j\leq k}\eta^\mu_i\eta^\mu_j,
$
the left hand side of which is smaller than 1, if $k\leq (1+\delta)\log(N)$ and $N$ is large enough. Since $$\mathbb{P}(A_\delta):=\mathbb{P}\left((1-\delta)\log(N)<\sum_{j=1}^N\vert\xi^1_j\vert<(1+\delta)
\log(N)\right)
\longrightarrow1
$$ as $N$ tends to infinity, it suffices to show that
\begin{align}\label{BEG erste Version dritte Gleichung}
\liminf_{N\rightarrow\infty}\min_{\substack{k\in\mathbb{N}:k/\log(N)\\\in(1-\delta,1+\delta)}}\mathbb{P}\left(\sum_{\mu=2}^M\sum_{j\leq k}\eta^\mu_N\eta^\mu_j\geq1\right) >0.
\end{align}
To analyse this expression let $U(N)$ be the random variable
$U(N):=\sum_{\mu=2}^M\vert \xi^\mu_N\vert.
$
$U(N)$ is Binomially distributed with parameters $M-1=\alpha\frac{N^2}{\log(N)^2}-1$ and $p$. As $M$ is large we may replace $M-1$ by $M$. Using Chebyshev's inequality, we obtain
$$\mathbb{P}\left(\frac{(1-\delta)N\alpha}{\log(N)}<U(N)<\frac{(1+\delta)N\alpha}{\log(N)}\right)
\longrightarrow1
$$
for each $\delta>0$ as $N$ tends to infinity. Let us denote
$\lbrace\frac{U(N)\log(N)}{N\alpha}\in(1-\delta,1+\delta)\rbrace=:B_\delta(N).$
It then suffices to consider the set $B_\delta(N)$. Indeed, we observe
\begin{align*}
\mathbb{P}\left(\sum_{\mu=2}^M\sum_{j\leq k}\eta^\mu_N\eta^\mu_j\geq1\right)\geq
\mathbb{P}( B_\delta(N))\min_{\substack{m\in\mathbb{N}:m\log(N)/\\(N \alpha )\in(1-\delta,1+\delta)}}
\mathbb{P}\left[\sum_{\mu=2}^M
\sum_{j\leq k}\eta^\mu_N\eta^\mu_j\geq1\Big\vert U(N)=m\right].
\end{align*}
For neuron $N$, we define the following two random variables:
$$V(k,N):=\sum_{\mu:\vert\xi^\mu_N\vert=1}\quad\sum_{j=1}^k\vert\xi_j^\mu\vert
\qquad \mbox{and }
W(k,N):=\sum_{\mu:\vert\xi^\mu_N\vert=0}\quad\sum_{j=1}^k\vert\xi_j^\mu\vert.
$$
We fix $k$, omit the reference to the dependence on $N$ and $k$ in the next computation, and write $V$, $W$ and $U$, instead. The sum in (\ref{BEG erste Version dritte Gleichung}) can be rewritten as
\begin{align}\label{Umformung BEG erste Version}
&\sum_{\mu=2}^M\sum_{j=1}^k\eta^\mu_N\eta^\mu_j
=\sum_{\mu:\vert\xi^\mu_N\vert=1}\text{ }\sum_{j=1}^k(1-p)\eta^\mu_j+\sum_{\mu:\vert\xi_N^\mu\vert=0}\text{ }\sum_{j=1}^k(-p)\eta^\mu_j\nonumber\\
=&V\left(1-p\right)^2+(Uk-V)(-p)(1-p)+W(1-p)(-p)+\left((M-U)k-W\right)p^2\nonumber\\
=&V-Vp-Ukp-Wp+Mp^2k
=V-Vp-Ukp-Wp+\alpha k.
\end{align}
Assume that $U(N)=m$. Then $V(k,N)$ is Binomially distributed with parameters $km$ and $p$. We use the Berry-Esseen-bound (see \cite{durrett_book} Theorem 3.4.9) to obtain
$$\Bigg\vert\mathbb{P}\left(\frac{V(k,N)-kmp}{\sqrt{p(1-p)km}}\leq b \Big\vert U(N)=m\right)-\Phi(b)\Bigg\vert\leq\frac{Cp(1-p)\left((1-p)^2+p^2\right)}{p(1-p)\sqrt{p(1-p)km}}
$$
where $\Phi(\cdot)$ is the distribution function of the standard normal distribution.
For $k,m$ taken from the sets we consider, i.e. fulfilling
$$\vert k-\log(N)\vert \leq \delta \log(N),\quad \vert m-\alpha N/\log(N)\vert \leq \delta \alpha N/\log(N),
$$
the right hand side is at most equal to
$$\frac{Cp(1-p)\left((1-p)^2+p^2\right)}{p(1-p)\sqrt{p(1-p)km}}\leq \frac{C\left((1-p)^2+p^2\right)}{\sqrt{(1-p)\alpha (1-\delta)^2\log(N)}}
$$
and therefore vanishing as $N$ tends to infinity. In particular, this implies
\begin{align*}
&\mathbb{P}\left(V(k,N)\in\left(kmp+0.1\sqrt{kmp(1-p)},kmp+3\sqrt{kmp(1-p)}\right)\Big \vert U(N)=m\right)\\
\geq&\Phi(3)-\Phi(0.1)-2C\frac{1-2p+2p^2}{\sqrt{(1-p) \alpha (1-\delta)^2 \log(N)}}
\end{align*}
for the above choice of $k$ and $m$.

The same line of arguments holds for $W(k,N)$: Conditionally on $\lbrace U(N)=m\rbrace$, $W(k,N)$ is Binomially distributed with parameters $(M-1-m)k$ and $p$. Without loss of generality, we again replace $(M-1-m)k$ by $(M-m)k$. The Berry Esseen theorem in this case gives
$$\Bigg\vert\mathbb{P}\left(\frac{W(k,N)-(M-m)kp}{\sqrt{p(1-p)(M-m)k}}\leq b \Big\vert U(N)=m\right)-\Phi(b)\Bigg\vert\leq\frac{Cp(1-p)\left((1-p)^2+p^2\right)}{\sqrt{p^3(1-p)^3(M-m)k}}
$$
and this also tends to 0 for our choice of $k$ and $m$.
 We conclude
\begin{align*}
\mathbb{P}&\left(  W(k,N)\in\Big((M-m)kp-3\sqrt{(M-m)kp(1-p)},(M-m)kp-\right.\\
&\left.\quad\quad0.1\sqrt{(M-m)kp(1-p)}\quad\Big\vert U(N)=m\right)\\\geq&\Phi(-0.1)-\Phi(-3)-2C\frac{1-2p+2p^2}{\sqrt{\log(N)(1-\delta)  \alpha (1-p)[N/\log(N)-1-\delta]}}.
\end{align*}
We will see that for an arbitrary but fixed choice of $k$ and $m$, such that $k$ and $m$ are chosen as above, and with
$$F(k,m,N):=\Big\lbrace V(k,N)\in\left(kmp+0.1\sqrt{kmp(1-p)},kmp+3\sqrt{kmp(1-p)}\right)\Big\rbrace
$$
and
\begin{align*}
G(k,m,N):=&\Big\lbrace  W(k,N)\in\Big((M-m)kp-3\sqrt{(M-m)kp(1-p)},
\\&\quad\quad
(M-m)kp-0.1\sqrt{(M-m)kp(1-p)}\Big)\Big\rbrace,
\end{align*}
we have
\begin{equation}\label{prec_claim}
\lbrace U(N)=m\rbrace \cap \Big\lbrace\sum_{\mu=2}^M
\sum_{j\leq k}\eta^\mu_N\eta^\mu_j\geq1\Big\rbrace \supseteq \lbrace U(N)=m\rbrace \cap  F(k,m,N)\cap G(k,m,N).
\end{equation}
This yields
\begin{align*}
&\liminf_{N\rightarrow\infty}\mathbb{P}\left(\exists i\leq N:\xi^1_i=0, T_i(\xi^1)\neq 0\right)
\\ \geq&
 \liminf_{N\rightarrow\infty}\mathbb{P}(A_\delta)\mathbb{P}(B_\delta(N))\min_{\substack{k,m\in\mathbb{N}:k/\log(N),\\m\log(N)/(\alpha N)\in(1-\delta,1+\delta)}}\mathbb{P}\left[\sum_{\mu=2}^M\sum_{j\leq k}\eta^\mu_N\eta^\mu_j\geq1\Big\vert  U(N)=m\right]\\
\geq&\liminf_{N\rightarrow\infty}\mathbb{P}(A_\delta)\mathbb{P}(B_\delta(N))\min_{\substack{k,m\in\mathbb{N}:k/\log(N),\\m\log(N)/(\alpha N)\in(1-\delta,1+\delta)}}\mathbb{P}\left[F(k,m,N) \cap G(k,m,N)\Big\vert   U(N)=m\right]
\end{align*}
which is larger than $0$.
To show \eqref{prec_claim}, we write
$$U(N)=m=\rho_2\alpha N/\log(N),\quad k=\rho_1\log(N),$$ with $\rho_1,\rho_2\in(1-\delta,1+\delta)$, as well as
$$V(k,N)=\rho_1\rho_2\alpha \log(N)+\rho_3\sqrt{\rho_1\rho_2\alpha\log(N)(1-p)}$$ and $$W(k,N)=\alpha\rho_1\left[ N-\rho_2 \log(N)\right]-\rho_4 \sqrt{\alpha\rho_1\left[ N-\rho_2 \log(N)\right](1-p)}$$
again $\rho_3,\rho_4\in(0.1,3)$. We transform the right hand side of (\ref{Umformung BEG erste Version}) in the following way:
\begin{align*}
&V(k,N)-V(k,N)p-U(N)kp-W(k,N)p+\alpha k\\
=&\rho_3\sqrt{\rho_1\rho_2\alpha\log(N)(1-p)}-
\frac{\log(N)^2}{N}\rho_1\rho_2\alpha -\rho_3\frac{\log(N)}{N}\sqrt{\rho_1\rho_2\alpha \log(N)(1-p)}\\
&-\alpha\rho_1\log(N)+\frac{\log(N)^2}{N}\rho_1\rho_2\alpha +\frac{\log(N)}{N}\rho_4 \sqrt{\alpha\rho_1\left[ N-\rho_2 \log(N)\right](1-p)}
+\alpha\rho_1\log(N)\\
=&\rho_3\sqrt{\rho_1\rho_2\alpha\log(N)(1-p)}+O\left(\frac{\log(N)}{\sqrt{N}}\right)\geq 1
\end{align*}
if $N$ is large enough.

Consequently, with positive probability not converging to zero, a randomly chosen inactive neuron of message $\xi^1$ is turned into a 1 or a -1 and $\xi^1$ is not a fixed point of the dynamics.
\end{proof}

\begin{rem}
It is exactly the sparsity of the patterns that poses the problems with the dynamics given by \eqref{BEG2}. For a pattern used as input and an inactive neuron $i$ of the pattern, the part of $\theta_i$ coming from the stored pattern itself: on the one hand, there are only a few active neurons, on the other hand, $p$ is very small and the resulting threshold can easily be exceeded by the noise term of $\theta_i$.
\end{rem}

\section{The Storage Capacity of the BEG Model as defined in \eqref{BEG3}}

To cope with the sparsity of the patterns, we add a threshold and change the dynamics of the BEG model into \eqref{BEG3} with a value of $\gamma >0$ to be chosen.
A main part of the current sections consists of checking whether with this new dynamics we can store an amount
$M=\alpha N^2/\log(N)^2$ patterns in the model in such a way that an arbitrary one is stable with high probability if $\alpha$ is appropriately chosen and $N$ tends to infinity:
\begin{theorem}\label{BEG Kapazitaet}
In the BEG network with $M=\alpha N^2/\log(N)^2$ the dynamics defined in \eqref{BEG3} satisfies
$$\lim_{N\rightarrow\infty}\mathbb{P}\left(\exists i\leq N:
T_i(\xi^\mu)\neq\xi^\mu_i\right)=0
$$
for any arbitrary but fixed $\mu$, if the following conditions are fulfilled:
$$0<\gamma<2 \qquad \mbox{and }\quad
\alpha<\frac{\gamma}{x^*_\gamma-1}
$$
with $x^*_\gamma$ being the unique root of the function
$$ g_\gamma(x):=x\left(1+\frac{2}{\gamma}-\log(x)\right)-1-\frac{2}{\gamma}
$$
in $(1,\infty)$. This bound is sharp: If
$\alpha>\frac{\gamma}{x^*_\gamma-1},
$
an arbitrary stored pattern is instable with high probability:
$$\lim_{N\rightarrow\infty}\mathbb{P}\left(\exists i\leq N:
T_i(\xi^\mu)\neq\xi^\mu_i\right)=1.
$$
\end{theorem}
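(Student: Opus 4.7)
The plan is to analyze separately the active ($\xi^1_i \neq 0$) and inactive ($\xi^1_i = 0$) neurons of the stored pattern (WLOG $\mu = 1$), after conditioning on the high-probability event $A_\delta$ from Section~3 that $k := \#\{j : \xi^1_j \neq 0\} = (1+o(1))\log N$. For an active neuron $i \leq k$ the $\nu = 1$ term contributes a deterministic signal $\xi^1_i(k-1)$ to $S_i(\xi^1)$ and $k-1$ to $\theta_i(\xi^1)$, so $|S_i|+\theta_i$ carries a signal of size $\sim 2\log N$. Bernstein-type estimates control the centered $\nu \geq 2$ noise to $o(\log N)$ with probability $1 - N^{-c(\gamma)}$; since $\gamma < 2$, both $|S_i|+\theta_i \geq \gamma\log N$ and $\sgn(S_i) = \xi^1_i$ follow, and a union bound over the $O(\log N)$ active neurons suffices.

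The delicate case is the inactive neurons $i > k$. Introduce the ``interaction count''
$$V := \#\{(\nu, j) : \nu \geq 2,\; j \leq k,\; |\xi^\nu_i| = |\xi^\nu_j| = 1\},$$
which is asymptotically $\mathrm{Poi}(\alpha\log N)$. The algebra of Section~3, combined with the concentration of $pUk$ and $pW$ (each of variance $O(\log^3 N/N)$) at $\alpha\log N$, gives
$$\theta_i(\xi^1) = V - \alpha\log N + o_\P(1).$$
Crucially, conditionally on $V$ and on the locations of the $V$ interacting pairs, $S_i(\xi^1) = \sum_{\ell=1}^V \epsilon_\ell$ with $\epsilon_\ell$ i.i.d. Rademacher. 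Up to negligible remainders this converts the stability of $i$ into
$$\P(T_i(\xi^1) \neq 0) = \P(|S_i| + V \geq (\alpha+\gamma)\log N).$$

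The heart of the argument is the sharp large-deviation estimate of this probability. Writing $V = x\log N$ with $x \in [(\alpha+\gamma)/2, \alpha+\gamma]$ and combining the Poisson rate function $I(y) = y\log y - y + 1$ with the Rademacher Cram\'er rate $H(t) = \tfrac{1+t}{2}\log(1+t) + \tfrac{1-t}{2}\log(1-t)$, the exponential decay rate is
$$F(x) := \alpha I(x/\alpha) + x H\!\left(\tfrac{\alpha+\gamma-x}{x}\right).$$
The identity $H'(u) = \tfrac12\log\frac{1+u}{1-u}$ and a small simplification give $F'(x) = \log\bigl((2x - (\alpha+\gamma))/\alpha\bigr)$, so the unique minimizer is $x^\star = \alpha + \gamma/2$, and a short computation yields
$$F(x^\star) = \tfrac12\bigl[(\alpha+\gamma)\log(1+\gamma/\alpha) - \gamma\bigr] = \tfrac12\,\alpha I(1 + \gamma/\alpha).$$
Setting $z = 1 + \gamma/\alpha$ (so $\alpha(z-1) = \gamma$), the bound $F(x^\star) > 1$ reads $\alpha I(z) > 2$; since $g_\gamma(z) = (2/\gamma)(z-1) - I(z)$, this is equivalent to $g_\gamma(z) < 0$, which by concavity of $g_\gamma$ (roots $1$ and $x^*_\gamma$, positive in between) is equivalent to $z > x^*_\gamma$, i.e.\ $\alpha < \gamma/(x^*_\gamma - 1)$. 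A union bound over the $\sim N$ inactive neurons closes the stability direction.

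For sharpness, when $\alpha > \gamma/(x^*_\gamma - 1)$ one has $F(x^\star) < 1$ and each inactive neuron is bad with probability $\gg 1/N$. Letting $Y$ count such neurons, $\E[Y] \to \infty$; conditionally on $\xi^1$ and on $(\xi^\nu_j)_{j \leq k,\,\nu \geq 2}$, the events $\{T_i(\xi^1) \neq 0\}$ for distinct inactive $i$ depend on the disjoint sub-samples $(\xi^\nu_i)_{\nu \geq 2}$, hence are conditionally independent; controlling the outer covariance gives $\mathrm{Var}(Y) = o(\E[Y]^2)$, and Chebyshev yields $\P(Y \geq 1) \to 1$. The main obstacle is the sharp constant in the step above: one needs the full Rademacher Cram\'er rate rather than Hoeffding, plus an exact optimization, and the decomposition $\theta_i = V - \alpha\log N + o_\P(1)$ must be rigorous enough that the factor $2$ survives intact; the sharpness half then requires a quantitative near-independence of bad events across neurons so that the second moment stays of the right order.
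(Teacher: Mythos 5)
Your proposal is correct in substance and lands exactly on the paper's threshold: your optimized rate $F(x^\star)=\tfrac12\alpha I(1+\gamma/\alpha)$, compared with $1$, is precisely the paper's condition $f_{\alpha,1}(1+\gamma/\alpha)<0$ (resp.\ $>0$), and your translation via $g_\gamma(z)=(2/\gamma)(z-1)-I(z)$ to $\alpha\lessgtr\gamma/(x^*_\gamma-1)$ matches the paper's page-\pageref{page 118} reformulation. The main organizational difference is in the stability direction for inactive neurons: the paper bounds $\mathbb{P}(|S_i|+\theta_i\geq\gamma\log N)$ by exponential Chebyshev applied to the exact sum, computing the per-pattern moment generating function and expanding in $p$, and only uses the Poisson--compound structure in the sharpness half; you use the Poisson--Rademacher representation ($V$ of overlaps approximately $\mathrm{Poi}(\alpha\log N)$, $S_i$ a Rademacher sum of length $V$) in both directions and optimize the two-step rate $\alpha I(x/\alpha)+xH(\cdot)$. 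By the contraction principle this is the same rate, and your explicit minimizer $x^\star=\alpha+\gamma/2$ is a nice piece of structure the paper never displays. Your sharpness argument (conditional independence of the bad events across inactive neurons given the first $k$ columns, second moment/Chebyshev) is essentially the paper's, which instead bounds $\mathbb{E}[q^{N-k}]$ directly and likewise reduces the dominant contribution to a compound-Poisson Cram\'er lower bound for patterns with exactly one overlap that are active at $i$.

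Two steps need more than you give, though both are fixable and do not change the threshold. First, in the stability direction the target is a per-neuron bound of order $o(1/N)$, so ``$V$ is asymptotically Poisson'' cannot be invoked through a total-variation approximation: the Le Cam-type error is of order $\log(N)^2/N$, which swamps $N^{-1-\epsilon}$. One must run the Chernoff bound on the exact law (per-pattern MGF, as the paper does), which reproduces the Poissonian rate in the limit; similarly the remainder in $\theta_i=V-\alpha k+o_\mathbb{P}(1)$ (the terms $pUk$, $pW$) must be controlled with failure probability $o(1/N)$ uniformly in $i$ (routine Chernoff for the binomials, but it has to be said), and since $k$ is only pinned to $(1\pm\delta)\log N$ the shift is really $\mathcal{O}(\delta\log N)$, which forces the continuity-in-$\rho$ limiting argument the paper carries out so that the sharp constant survives. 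Second, for active neurons a plain Bernstein bound with per-pattern blocks is too weak, because a single pattern can contribute order $k$ to the noise; you need either a sub-exponential/MGF version or the paper's case distinction $\alpha\lessgtr 2-\gamma$ (which is also where the requirement $\gamma<2$ genuinely enters). With these repairs your route goes through and is, modulo the reorganized upper bound, the paper's argument.
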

\begin{proof}
Without loss of generality, we consider message $\xi^1$.
First, observe that there are two principal types of errors which can occur, namely:
\begin{itemize}
\item
a 0 is turned into a 1 or to a -1
\item
a non-zero spin is turned to a 0 or multiplied by (-1).
\end{itemize}
For $\delta >0$ let us denote the event $\{\sum_{j=1}^N \vert \xi_j^1\vert/ \log(N) \in (1 - \delta; 1 + \delta)\}$ by $A_\delta$. 
Keeping the notation of the previous section,
intersecting the event that spin $i$ of $\xi^1$ is not recovered correctly by the dynamics with $A_\delta$ and
denoting the active spins of $\xi^1$ by $1, \ldots , k$ we obtain
\begin{align*}
&\mathbb{P}\left(\exists i\leq N: T_i(\xi^1)\neq\xi^1_i\right)
\leq\mathbb{P}(A_\delta)\max_{\substack{k\in\mathbb{N}:k/\log(N)\\
\in(1-\delta,1+\delta)}}\mathbb{P}\left(\exists i\leq N: T_i(\xi^1)\neq\xi^1_i\vert \mathcal{Z}_k\right)+\mathbb{P}(A_\delta^c).
\end{align*}
The probability $\mathbb{P}(A_\delta^c)$ tends to 0 and it suffices to examine the conditional probabilities, given $\mathcal{Z}_k$, for $k$ belonging to the set mentioned above.

We begin with the analysis of the first kind of error: to this purpose, we fix $k$ and take some $i\geq k+1$, e.\,g., $i=k+1$. An error in this place occurs if $\vert S_{k+1}(\xi^1)\vert+\theta_{k+1}(\xi^1)-\gamma \log(N)\geq0$, i.e., if
\begin{align}\label{BEG first case inequality}
\Bigg\vert\sum_{j\neq k+1}\sum_{\mu=1}^M\xi^1_{j}\xi_{k+1}^\mu\xi_j^\mu\Bigg\vert >-\frac{1}{(1-p)^2}\sum_{j\neq k+1}\left(\xi^1_j\right)^2\sum_{\mu=1}^M\eta_{k+1}
^\mu\eta_j^\mu+\gamma\log(N).
\end{align}
Since we consider $i=k+1$ and $\mathcal{Z}_k$, i.e., $\xi^1_{k+1}=0$ and additionally $\xi^1_j=0$ for $j>k+1$, the left hand side of (\ref{BEG first case inequality}) is equal to
$$\Bigg\vert\sum_{j\neq k+1}\sum_{\mu=1}^M\xi^1_{j}\xi_{k+1}^\mu\xi_j^\mu\Bigg\vert=
\Bigg\vert\sum_{j\neq k+1}\xi_j^1\xi^1_{j}\xi_{k+1}^1+\sum_{j=1}^k\sum_{\mu=2}^M\xi^1_{j}\xi_{k+1}^\mu\xi_j^\mu
\Bigg\vert=
\Bigg\vert\sum_{j=1}^k\sum_{\mu=2}^M\xi^1_{j}\xi_{k+1}^\mu\xi_j^\mu
\Bigg\vert.
$$
On the other hand, for the random part of the right-hand side of (\ref{BEG first case inequality}) observe that
\begin{align*}
&-\sum_{j\neq k+1}\left(\xi^1_j\right)^2\sum_{\mu=1}^M\eta_{k+1}^\mu
\eta_j^\mu
=
-\sum_{j=1}^{k}\left[\eta_{k+1}
^1\eta_j^1+\sum_{\mu=2}^M\eta_{k+1}
^\mu\eta_j^\mu\right]\\
=&-\sum_{j=1}^{k}\left[\left(-p\right)\left(1-p\right)+\sum_{\mu=2}^M\eta_{k+1}
^\mu\eta_j^\mu\right]
=kp(1-p)-\sum_{j=1}^{k}\sum_{\mu=2}^M\eta_{k+1}
^\mu\eta_j^\mu.
\end{align*}
Hence(\ref{BEG first case inequality}) becomes
$$
\Bigg\vert\sum_{j=1}^k\sum_{\mu=2}^M\xi^1
_{j}\xi_{k+1}^\mu\xi_j^\mu\Bigg\vert>\frac{kp}{(1-p)}-\frac{1}{(1-p)^2}\sum_{j=1}^{k}\sum_{\mu=2}^M\eta_{k+1}
^\mu\eta_j^\mu+\gamma\log(N).
$$
This is either fulfilled if
$$\sum_{j=1}^k\sum_{\mu=2}^M\left[\xi^1_{j}\xi_{k+1}^\mu\xi_j^\mu+\frac{1}{(1-p)^2}\eta_{k+1}
^\mu\eta_j^\mu\right]>\frac{kp}{(1-p)}+\gamma\log(N)
$$
or if
$$\sum_{j=1}^k\sum_{\mu=2}^M\left[-\xi^1_{j}\xi_{k+1}^\mu\xi_j^\mu+\frac{1}{(1-p)^2}\eta_{k+1}
^\mu\eta_j^\mu\right]>\frac{kp}{(1-p)}+\gamma\log(N).
$$
Now for i.i.d. random variables $Z_1,Z_2,Z_3$ with $\mathbb{P}(Z_1=\pm 1)=\frac{1}{2}$, the events $\lbrace Z_1Z_2Z_3=x_3\rbrace$ and $\lbrace Z_1=x_1,Z_2=x_2\rbrace$ are independent for each choice of $x_1,x_2,x_3\in\lbrace-1,1\rbrace$. The same is true for $\lbrace Z_1Z_2Z_3=x_3\rbrace$ and $\lbrace  Z_1=x_1\rbrace$ for arbitrary $x_1,x_3\in\lbrace-1,1\rbrace$ as well as for $\lbrace Z_1Z_2=x_2\rbrace$ and $\lbrace  Z_1=x_1\rbrace$ for arbitrary $x_1,x_2\in\lbrace-1,1\rbrace$. In particular, the conditional distribution of $Z_1 Z_2 Z_3$ with respect to an arbitrary realisation of $Z_1Z_2$ is the same as the distribution of $Z_3$. Similarly, the conditional distribution of $Z_1Z_2Z_3$ with respect to $\lbrace Z_1=x_1\rbrace$ is the distribution of $Z_2Z_3$ and the one of $Z_1Z_2$ with respect to $\lbrace Z_1=x_1\rbrace$ is the same as the one of $Z_2$, for arbitrary $x_1\in\lbrace-1,1\rbrace$.
So, for fixed $i$, the conditional distribution of $\xi^1_{j'}\xi^\mu_i\xi^\mu_{j'}$, for $j'\neq i$, given $\xi^1_j$, $1\leq j\leq N$, is
$$\mathbb{P}\left(\xi^1_{j'}\xi^\mu_i\xi^\mu_{j'}=\pm1\big\vert\thickspace \xi^1_j, j\leq N\right)=\frac{p^2}{2}\vert \xi^1_{j'}\vert, \quad \mathbb{P}\left(\xi^1_{j'}\xi^\mu_i\xi^\mu_ {j'}=0\big\vert\thickspace \xi^1_j, j\leq N\right)=1-\vert \xi^1_{j'}\vert p^2
$$
which is the distribution of $\xi^\mu_i\xi^\mu_{j'}$, if $\vert \xi^1_{j'}\vert =1$. Given an arbitrary realisation of $\xi^1_j,j\leq N$, and for fixed $i$,
$(\xi^1_j\xi^\mu_i\xi^\mu_j,  j\neq i, \mu \geq 2)\text{ and }(\vert \xi^1_j\vert \xi^\mu_i\xi^\mu_j,  j\neq i, \mu \geq 2)$ are identically distributed and we can conclude that conditionally on $\mathcal{Z}_k$, the sum $\sum_{ j\neq i}^N\sum_{\mu=2}^M\xi^1_{j}\xi_{i}^\mu\xi_j^\mu$ has the same distribution as
$\sum_{ j\neq i}^k\sum_{\mu=2}^M\xi_{i}^\mu\xi_j^\mu$.
In addition, conditionally on $\mathcal{Z}_k$, we have for an arbitrary $i$
\begin{eqnarray*}
\sum_{j=1, j\neq i}^N\sum_{\mu=2}^M\xi^1_{j}\xi_{i}^\mu\xi_j^\mu+\frac{1}{(1-p)^2}\left(\xi^1_j\right)^2\eta_i^\mu\eta_j^\mu &\sim&\sum_{j\leq k,j\neq i}\sum_{\mu=2}^M\xi_{i}^\mu\xi_j^\mu+\frac{1}{(1-p)^2}\eta_i^\mu\eta_j^\mu\\
 &\sim& \sum_{j\leq k,j\neq i}\sum_{\mu=2}^M-\xi_{i}^\mu\xi_j^\mu+\frac{1}{(1-p)^2}\eta_i^\mu\eta_j^\mu
\end{eqnarray*}
(where $\sim$ denotes equality in distribution)
because the sums
$\sum_{j=1,j\neq i}^N\sum_{\mu=2}^M\left(\xi^1_j\right)^2\eta^\mu_i\eta^\mu_j$
and $\sum_{j=1,j\neq i}^k\sum_{\mu=2}^M\eta^\mu_i\eta^\mu_j$ are measurable with respect to $\vert\xi^\mu_j\vert$, $1\leq j\leq N$, $1\leq\mu\leq M$ and the sums
$\sum_{j=1,j\neq i}^N\sum_{\mu=2}^M\xi^1_j\xi_{i}^\mu\xi_j^\mu$ and $\sum_{j=1,j\neq i}^k\sum_{\mu=2}^M\xi_{i}^\mu\xi_j^\mu
$ are symmetrically distributed with respect to these random variables.
Using these considerations we obtain for the inactive neurons of $\xi^1$:
\begin{align*}
&\mathbb{P}\left[\exists i\geq k+1:T_i(\xi^1)\neq0\vert\mathcal{Z}_k\right]\\
\leq& N\mathbb{P}\left[T_{k+1}(\xi^1)\neq0\vert\mathcal{Z}_k\right]
\leq 2N\mathbb{P}\left[\sum_{j\leq k}\sum_{\mu=2}^M\xi_{k+1}^\mu\xi_j^\mu+\frac{1}{(1-p)^2}\eta_{k+1}^\mu\eta_j^\mu>\frac{kp}{1-p}+\gamma\log(N)\right].
\end{align*}
Next we use the exponential Chebyshev inequality: For $t>0$,
\begin{align*}
\mathbb{P}&\left[\sum_{j\leq k}\sum_{\mu=2}^M\xi_{k+1}^\mu\xi_j^\mu+\frac{1}{(1-p)^2}\eta_{k+1}^\mu\eta_j^\mu>\frac{kp}{1-p}+\gamma\log(N)\right]\\
\leq\exp&\left[-t\left(\frac{kp}{1-p}+\gamma\log(N)\right)\right]
\mathbb{E}\left[
\exp\left(t\sum_{j\leq k}\sum_{\mu=2}^M\xi_{k+1}^\mu\xi_j^\mu+\frac{1}{(1-p)^2}\eta_{k+1}^\mu\eta_j^\mu\right)\right].
\end{align*}
As the $M$ messages are independent the expectation on the right hand side becomes
\begin{align*}
&\mathbb{E}\left[\exp\left(t\sum_{j\leq k}\xi_{k+1}^M\xi_j^M+\frac{1}{(1-p)^2}\eta_{k+1}^M\eta_j^M\right)\right]^{M-1}\\
=&\left[(1-p)\cdot\left[(1-p)e^{t\frac{p^2}{(1-p)^2}}+pe^{-t\frac{p}{1-p}}\right]^{k}+p\cdot\left[(1-p)e^{-t\frac{p}{1-p}}+\frac{1}{2}pe^{2t}+\frac{1}{2}p\right]^{k}\right]^{M-1}.
\end{align*}
Recall, that we are considering $k\in\mathbb{N}$ with $\frac{k}{\log(N)}\in(1-\delta,1+\delta)$. Expanding the exponential and employing the Binomial formula, we obtain for $t$ not depending on $N$:
\begin{align}\label{exp EW BEG komplett}
&\left[(1-p)\cdot\left((1-p)e^{t\frac{p^2}{(1-p)^2}}+pe^{-t\frac{p}{1-p}}\right)^{k}+p\cdot\left((1-p)e^{-t\frac{p}{1-p}}+\frac{1}{2}pe^{2t}
+\frac{1}{2}p\right)^{k}\right]^{M-1}\nonumber\\
=&\left[(1-p)\left[(1-p)\left(1+\frac{tp^2}{(1-p)^2}+\mathcal{O}(p^4)\right)+p\left(1-\frac{tp}{1-p}+\mathcal{O}(p^2)\right)\right]^k\right.\nonumber\\
&\quad\quad\quad\left.+p\left[(1-p)\left(1-\frac{tp}{1-p}+\mathcal{O}(p^2)\right)+\frac{1}{2}pe^{2t}+\frac{1}{2}p\right]^k\right]^{M-1}\nonumber\\
=&\left[(1-p)\left[1+\mathcal{O}(p^3)\right]^k+p\left[1-\frac{tp}{1-p}-\frac{1}{2}p+\frac{1}{2}pe^{2t}+\mathcal{O}(p^2)\right]^k\right]^{M-1}\nonumber\\
=&\left[1+kp^2\left(\frac{1}{2}e^{2t}-\frac{1}{2}-\frac{t}{1-p}\right)+\mathcal{O}(p^3k^2)\right]^{M-1}\nonumber\\
\leq&\exp\left[k\alpha\left(\frac{1}{2} e^{2t}-\frac{1}{2}-\frac{t}{1-p}\right)+\mathcal{O}\left(Mp^3k^2\right)\right]
\le \exp\left[k\alpha\left(\frac{1}{2} e^{2t}-\frac{1}{2}-\frac{t}{1-p}\right)+\mathcal{O}\left(pk^2\right)\right].
\end{align}
In the last steps we used the inequality $1+x\leq e^x$ for each $x\in\mathbb{R}$ and inserted our choice for the number of stored patterns $M=\alpha N^2/\log(N)^2$. In combination with the previous steps of the proof, the following conditional probability is at most
\begin{align*}
&\mathbb{P}\left(\exists i\geq k+1:T_i(\xi^1)\neq0\vert\mathcal{Z}_k\right)\\
\leq&\exp\left(\log(N)+\log(2)-\frac{tkp}{1-p}-t\gamma \log(N)+k\alpha\left(\frac{1}{2} e^{2t}-\frac{1}{2}-\frac{t}{1-p}\right)\right)\left(1+\mathcal{O}\left(pk^2\right)\right).
\end{align*}
We need to show that the second line vanishes for $\frac{k}{\log(N)}\in(1-\delta,1+\delta)$.
Anticipating that $t$ will not depend on $N$ and writing $k=\rho \log(N)$ for $\rho\in(1-\delta,1+\delta)$, the exponent is equal to
\begin{align}\label{expBEG}
\log(N)-t\gamma \log(N)+\rho\log(N)\alpha\left(\frac{1}{2} e^{2t}-\frac{1}{2}-t\right)+o(\log(N)).
\end{align}
 Thus $\mathbb{P}\left(\exists i\leq N:\xi^1_i=0,T_i(\xi^1)\neq0\right)\to 0$, if, for each $\rho\in(1-\delta,1+\delta)$ , there is some $t_{\alpha,\gamma,\rho}>0$ depending on $\rho$, $\gamma$ and $\alpha$ such that
$$
h_{\alpha,\gamma,\rho}(t_{\alpha,\gamma,\rho}):=-t_{\alpha,\gamma,\rho}\gamma+\rho\alpha\left(\frac{1}{2} e^{2t_{\alpha,\gamma,\rho}}-\frac{1}{2}-t_{\alpha,\gamma,\rho}\right)<-1.
$$
 Using the global minimum of $h_{\alpha,\gamma,\rho}$, i.e. 
$t^*_{\alpha,\gamma,\rho}:=\frac{1}{2}\log\left(1+\frac{\gamma}{\rho\alpha}\right), t^*_{\alpha,\gamma,\rho}>0
$
 the dominant term in (\ref{expBEG}) is equal to
\begin{align*}
&\log(N)-t_{\alpha,\gamma,\rho}^*\gamma \log(N)+\rho\log(N)\alpha\left(\frac{1}{2} e^{2t^*_{\alpha,\gamma,\rho}}-\frac{1}{2}-t^*_{\alpha,\gamma,\rho}\right)\\
=&\log(N)\left[1-\gamma\frac{1}{2}\log\left(1+\frac{\gamma}{\rho\alpha}\right)+\frac{1}{2}\rho\alpha\left(1+\frac{\gamma}{\rho\alpha}\right)-\frac{1}{2}\rho\alpha-\frac{1}{2}\rho\alpha\log\left(1+\frac{\gamma}{\rho\alpha}\right)\right]\\
=&\log(N)f_{\alpha,\rho}(x_{\alpha,\gamma,\rho})
\end{align*}
with
$f_{\alpha,\rho}(x):=1+\frac{1}{2}\rho\alpha\left(-x\log(x)+x-1\right)
\mbox{ and }
x_{\alpha,\gamma,\rho}:=1+\frac{\gamma}{\rho\alpha}.$
We thus need to have that
$f_{\alpha,\rho}(x_{\alpha,\gamma,\rho})
<0$
for each $\rho\in(1-\delta,1+\delta)$.
For fixed $\gamma$ and $\alpha$, $f_{\alpha,\rho}(x_{\alpha,\gamma,\rho})$ is continuous in $\rho\in\mathbb{R}_+$.
If for fixed $\gamma,\alpha$, the inequality $f_{\alpha,1}(x_{\alpha,\gamma,1})<0$ holds true, then also
$f_{\alpha,\rho}(x_{\alpha,\gamma,\rho})<0$ for all $\rho\in(1-\delta,1+\delta)$
if $\delta>0$ is small enough. Therefore, $f_{\alpha,1}(x_{\alpha,\gamma,1})<0$, implies $\mathbb{P}\left(\exists i\leq N:\xi^1_i=0,T_i(\xi^1)\neq0\right)\to 0$ as desired.
It thus remains to determine the set $\lbrace \alpha>0: f_{\alpha,1}(x_{\alpha,\gamma,1})<0\rbrace$ for fixed $\gamma$. Multiplying by $2/\alpha$ and replacing $1/\alpha$ by $(x_{\alpha,\gamma,1}-1)/\gamma$ allows to reformulate the condition $f_{\alpha,1}(x_{\alpha,\gamma,1})<0$ in terms of the function $ g_\gamma(x_{\alpha,\gamma,1})$ defined in Theorem~\ref{BEG Kapazitaet}. $f_{\alpha,1}(x_{\alpha,\gamma,1})<0$ holds if
\label{page 118}
$$ g_\gamma(x_{\alpha,\gamma,1}):=x_{\alpha,\gamma,1}\left(1+\frac{2}{\gamma}-\log(x_{\alpha,\gamma,1})\right)-1-\frac{2}{\gamma}<0.
$$
The function $g_{\gamma}:(0,\infty)\rightarrow\mathbb{R}$ has two roots, one is equal to 1, the other one is bigger than 1 (and bigger than the extremal point $\hat{x}_\gamma=e^{2/\gamma}$). The derivative $g_\gamma'(x)=2/\gamma-\log(x)$ is positive on $(0,\hat{x}_\gamma)$ and negative on $(\hat{x}_\gamma,\infty)$. Let $x^*_\gamma$ be the unique root of $g_\gamma$ bigger than 1. The function $g_\gamma$ is negative on the intervals $(0,1)$ and $(x^*_\gamma,\infty)$ and positive on $(1,x^*_\gamma)$. We choose $\alpha>0$ depending on $\gamma$ such that
\begin{align}
\alpha<\frac{\gamma}{x^*_\gamma-1}.
\end{align}
Then each pair of $\gamma>0$ and $\alpha>0$ chosen subject to these conditions provides
$$\mathbb{P}\left[\exists i\leq N
:\xi^1_i=0,T_i(\xi^1)\neq0\right]\longrightarrow0 \qquad \mbox{as $N$ tends to infinity.}
$$
Due to conditions following from subsequent calculations, $\gamma$ must additionally be chosen such that
$\gamma\in(0,2).$

We now analyse the second type of error. We choose an arbitrary neuron that is activated in $\xi^1$; assuming again that $\mathcal{Z}_k$ holds, we take $i=1$. The neuron is not mapped to its original value if either
\begin{align}\label{BEG second type error first inequality}
\vert S_1(\xi^1)\vert+\theta_1(\xi^1)-\gamma\log(N)<0
\end{align}
or if the two conditions
\begin{align}\label{BEG second type error second inequality}
\vert S_1(\xi^1)\vert+\theta_1(\xi^1)-\gamma\log(N)>0
\end{align}
and
\begin{align}\label{BEG second type error third inequality}
\text{sgn}(S_1(\xi^1))\neq\xi^1_1
\end{align}
hold.
Without loss of generality, we set $\xi^1_1=1$.
Inequality (\ref{BEG second type error first inequality}) is either satisfied if
\begin{align}\label{BEG inequality S Teil 1}\vert S_1(\xi^1)\vert=S_1(\xi^1),\text{  }S_1(\xi^1)+\theta_1(\xi^1)-\gamma\log(N)<0
\end{align}
or if
\begin{align}\label{BEG inequality S Teil 2}
\vert S_1(\xi^1)\vert=-S_1(\xi^1),\text{  }-S_1(\xi^1)+\theta_1(\xi^1)-\gamma\log(N)<0.
\end{align}
 Condition (\ref{BEG second type error third inequality}) is necessary for (\ref{BEG inequality S Teil 2}). So the probability of having an error, given $\mathcal{Z}_k \cap\{\xi^1_1=1\}:= \bar{\mathcal{Z}}_k$, is bounded by the sum of the two probabilities
\begin{align}\label{BEG second type error first probability}
\mathbb{P}\left(S_1(\xi^1)+\theta_1(\xi^1)-\gamma\log(N)<0,\vert S_1(\xi^1)\vert=S_1(\xi^1)\Big\vert\bar{\mathcal{Z}}_k
\right)
\end{align}
and
\begin{align}\label{BEG second typ error second}
\mathbb{P}\left(\text{sgn}(S_1(\xi^1))\neq1\vert\bar{\mathcal{Z}}_k
\right).
\end{align}
The probability in (\ref{BEG second typ error second}) can be written as
\begin{align*}
&\mathbb{P}\left[\text{sgn}(S_1(\xi^1))\neq 1\vert\bar{\mathcal{Z}}_k\right]
=\mathbb{P}\left[-\sum_{j=2}^N
\sum_{\mu=2}^M\xi^1_{j}\xi_1^\mu\xi_j^\mu>k-1\Big
\vert\mathcal{Z}_k\right].
\end{align*}
Conditionally on $\mathcal{Z}_k$, the distribution of $\sum_{j=2}^N
\sum_{\mu=2}^M\xi^1_{j}\xi_1^\mu\xi_j^\mu$ is symmetric and agrees with the distributions of
$
\sum_{j=2}^k
\sum_{\mu=2}^M\xi_1^\mu\xi_j^\mu\stackrel{d}{\sim}-\sum_{j=2}^k
\sum_{\mu=2}^M\xi_1^\mu\xi_j^\mu.
$
Using the exponential Chebyshev inequality and the expansion of the logarithm, we compute that for $s$ not depending on $N$
\begin{align*}
&\mathbb{P}\left[-\sum_{j=2}^N\sum_{\mu=2}^M\xi^1_{j}\xi_1^\mu\xi_j^
\mu>k-1\Big\vert\mathcal{Z}_k\right]=\mathbb{P}
\left[\sum_{j=2}
^k\sum_{\mu=2}^M\xi_1^\mu\xi_j^\mu>k-1\right]\\
\leq&\exp\left[s(-k+1)\right]\left[p\left(1-p+p\cdot\text{cosh}(s)\right)^{k-1}+1-p\right]^{M-1}\\
=&\exp\left[s(-k+1)\right]\exp\left((M-1) \log[p\left(1-p+p\cdot\text{cosh}(s)\right)^{k-1}+1-p]\right)\\
=&\exp\left[s(-k+1)\right]\exp\left((M-1) \log[1-p^2(k-1)+p^2(k-1) \cosh(s)+\mathcal{O}(p^3k^2)]\right)\\
\leq&\exp\left[-s(k-1)+(k-1)\alpha(\text{cosh}(s)-1)+\mathcal{O}(pk^2)\right].
\end{align*}
Minimizing the exponent in $s$ yields $s^*_\alpha=\text{arsinh}(1/\alpha)$ as global minimum. It satisfies
\begin{align*}
-s^*_\alpha+\alpha(\text{cosh}(s^*_\alpha)-1)=-\text{arsinh}(1/\alpha)+\alpha(\text{cosh}(\text{arsinh}(1/\alpha))-1)
\end{align*}
which is negative for every $\alpha>0$ because $g(x)=-x\text{arsinh}(x)+\text{cosh}(\text{arsinh}(x))-1$ is negative on $\mathbb{R}_+$. The probability vanishes because $k$ can be assumed to tend to infinity.

It remains to examine the probability in (\ref{BEG second type error first probability}). We show that
\begin{align}\label{BEG probability three}
&\mathbb{P}\left[\vert S_1(\xi^1)\vert=S_1(\xi^1),\text{  }S_1(\xi^1)+\theta_1(\xi^1)-\gamma\log(N)<0\Big\vert
\bar{\mathcal{Z}}_k\right]\nonumber\\
\leq&\mathbb{P}\left[S_1(\xi^1)+\theta_1(\xi^1)-
\gamma\log(N)<0\Big\vert\bar{\mathcal{Z}}_k\right]
\nonumber\\
=&\mathbb{P}\left[\sum_{j= 2}^N\sum_{\mu=1}^M\xi^1_j\xi^\mu_j\xi^\mu_1+\frac{\left(\xi_j^1\right)^2}{(1-p)^2}\eta_1^\mu\eta_j^\mu-\gamma\log(N)<0\Big\vert\bar{\mathcal{Z}}_k\right]\nonumber\\
=&\mathbb{P}\left[2(k-1)-\gamma\log(N)+\sum_{1<j\leq k}\sum_{\mu=2}^M\xi^1_j\xi^\mu_j\xi^\mu_1+\frac{\left(\xi_j^1\right)^2}{(1-p)^2}\eta_1^\mu\eta_j^\mu<0\Big\vert\bar{\mathcal{Z}_k}
\right]\nonumber\\
=&\mathbb{P}\left[-\left(\sum_{1<j\leq k}\sum_{\mu=2}^M\xi^\mu_j\xi^\mu_1+\frac{1}{(1-p)^2}\eta_1^\mu\eta_j^\mu\right)>2k-2-\gamma\log(N)\right].
\end{align}
Using the same arguments as for the previous two probabilities, especially in (\ref{exp EW BEG komplett}), for $u>0$ the
probability in (\ref{BEG probability three}) is bounded by
\begin{align}\label{exponent_prob_three}
&\mathbb{P}\left[-\left(\sum_{1<j\leq k}\sum_{\mu=2}^M\xi^\mu_j\xi^\mu_1+\frac{1}{(1-p)^2}\eta_1^\mu\eta_j^\mu\right)>2k-2-\gamma\log(N)\right]\nonumber \\
\leq&\exp\left[u\left(-2k+2+\gamma\log(N)\right)+k\alpha\left(\frac{1}{2}e^{-2u}-\frac{1}{2}+u\right)\right]\left(1+\mathcal{O}(pk^2)\right).
\end{align}
Due to the condition of the theorem, $\gamma<2$. If $\alpha>2-\gamma$, let $0<\delta<\frac{\gamma}{2-\alpha}-1$ and $\delta<\frac{2-\gamma}{2}$.
Then, for $k=\rho\log(N)\in A_\delta$  we choose
$u^*_{\rho,\alpha,\gamma}=-\frac{1}{2}\log\left(1-\frac{2}{\alpha}+\frac{\gamma}{\rho\alpha}\right)
$
to minimise the function
$u\left(-2k+\gamma\log(N)\right)+k\alpha\left(\frac{1}{2}e^{-2u}-\frac{1}{2}+u\right)
$ in $u$ (here and in the sequel we ignore the +2 in the exponent of \eqref{exponent_prob_three}, because it is negligible with respect to $\gamma \log N$).
Inserting $u^*_{\rho,\alpha,\gamma}$ and $k=\rho\log(N)$ into the above expression leads to an exponent of the form
\begin{align*}
&u^*_{\rho,\alpha,\gamma}\left(-2\rho\log(N)+\gamma\log(N)\right)+\rho\log(N)\alpha\left(\frac{1}{2}e^{-2u^*_{\rho,\alpha,\gamma}}-\frac{1}{2}+u^*_{\rho,\alpha,\gamma}\right)\\
=&\frac{1}{2}\log(N)\rho\alpha\left[-\log\left(1-\frac{2}{\alpha}+\frac{\gamma}{\rho\alpha}\right)\left(1-\frac{2}{\alpha}+\frac{\gamma}{\rho\alpha}\right)-\frac{2}{\alpha}+\frac{\gamma}{\rho\alpha}\right]\\
=&\frac{1}{2}\log(N)\rho\alpha\left[-v_{\rho,\alpha,\gamma}\log(v_{\rho,\alpha,\gamma})+v_{\rho,\alpha,\gamma}-1\right],
\end{align*}
where we set $v_{\rho, \alpha , \gamma}=1-\frac{2}{\alpha}+\frac{\gamma}{\rho\alpha}.$
Now $-x\log(x)+x-1<0$ on $\mathbb{R}_+$. Since
$2\rho-\gamma>0$ and $\alpha>2-\frac{\gamma}{\rho}$ due to the choice of $\delta$, both, $u^*_{\rho,\alpha,\gamma}>0$ and $v_{\rho,\alpha,\gamma}>0$, hold.

If $0<\alpha <2-\gamma$, we may choose any $u>0$. For $0<\delta<\frac{2-\gamma-\alpha}{2+\alpha}$ 
and on $A_\delta$,\label{page 121}
\begin{align*}
&u\left(-2k+2+\gamma\log(N)\right)+k\alpha\left(\frac{1}{2}e^{-2u}-\frac{1}{2}+u\right)\\
\leq& u\log(N)\left(-2(1-\delta)+\gamma+(1+\delta)\alpha \right)+2u+ (1-\delta)\log(N) \alpha \frac{1}{2}\left(e^{-2u}-1\right)\\
\leq&2u+ (1-\delta)\log(N) \alpha \frac{1}{2}\left(e^{-2u}-1\right)
\end{align*}
due to the choice of $\delta$. This tends to $-\infty$ for fixed $u>0$.

For $\alpha=2-\gamma$, we estimate
\begin{align*}&u\left(-2k+\gamma\log(N)\right)+k\alpha\left(\frac{1}{2}e^{-2u}-\frac{1}{2}+u\right)\\
=&u\left(-(\gamma+\alpha) k+\gamma\log(N)\right)+k\alpha\left(\frac{1}{2}e^{-2u}-\frac{1}{2}+u\right)\\
\leq&u\left(-\gamma (1-\delta)\log(N)+\gamma\log(N)\right)+(1-\delta)\log(N)\alpha\left(\frac{1}{2}e^{-2u}-\frac{1}{2}\right)
\end{align*}
and choose $u=-1/2\log(\delta\gamma/((1-\delta)\alpha))$, if $\delta<\alpha/(\alpha+\gamma)$.
 As $\mathbb{P}(A_\delta)$ tends to 1 for each $\delta>0$, this finally implies that
$$\mathbb{P}(\exists i\leq N: \xi^1_i\neq0, T_i(\xi^1)\neq\xi^1_i)\leq \mathbb{P}(A_\delta)(1+\delta)\log(N)\mathbb{P}(T_1(\xi^1)\neq\xi^1_1\vert \mathcal{Z}_k)\longrightarrow0
$$
for $0<\gamma<2 $ without a further condition on $\alpha$. This proves the first part of the theorem.

In the second part of the proof we show that the bound on $\alpha$ is sharp. It is based on the following observations:
(1)
The probability of the event $A_\delta$ converges to 1, for each $\delta>0$.

(2) The random variables
$$X_1(k)=\sum_{\mu=2}^M\mathds{1}_{\lbrace\sum_{j\leq k}\vert\xi^\mu_j\vert=1\rbrace},\quad X_2(k)=\sum_{\mu=2}^M\mathds{1}_{\lbrace\sum_{j\leq k}\vert\xi^\mu_j\vert=2\rbrace},\quad X_3(k)=\sum_{\mu=2}^M\mathds{1}_{\lbrace\sum_{j\leq k}\vert\xi^\mu_j\vert>2\rbrace}
$$
are Binomially distributed with parameters $M-1$ and
$$p_1(k)=kp(1-p)^{k-1},\quad p_2(k)=\binom{k}{2}p^2(1-p)^{k-2}\quad\text{and}\quad
p_3(k)=\binom{k}{3}p^3+\mathcal{O}(k^4p^4),
$$
respectively. Chebyshev's inequality implies for each $\delta>0$
\begin{align*}
& \mathbb{P}\left(\frac{X_1(k)}{\alpha k \frac{N}{\log(N)}}\notin(1-\delta,1+\delta)
\right)\leq \frac{\mathbb{V}(X_1(k))}{\left[\delta \alpha k N/\log N-Mk(k-1)p^2+\mathcal{O}(pk^3)\right]^2}
\end{align*}
where the $Mk(k-1)p^2+\mathcal{O}(pk^3)$ compensates for the fact that $\E X_1(k)$ is not exactly equal to $Mkp= \alpha k N /\log N$.
The right hand side can be estimated by
\begin{align*}
& \frac{\mathbb{V}(X_1(k))}{\left[\delta \alpha k N/\log N-Mk(k-1)p^2+\mathcal{O}(pk^3)\right]^2}
\leq  \frac{Mkp}{M^2k^2p^2 [\delta-(k-1)p +\mathcal{O}(p^2k^2)]^2}\\
&\leq  \frac{\log(N)}{[\delta-(k-1)p+\mathcal{O}(p^2k^2)]^2\alpha kN},
\end{align*}
which tends to 0 as $N,k$ tend to infinity for the given choice of $k\leq(1+\delta)\log(N)$.
Likewise we obtain
\begin{align*}
&\mathbb{P}\left(\frac{X_2(k)}{\alpha \binom{k}{2}}\notin(1-\delta,1+\delta)\right)
\le
\frac{1}{(\delta-p(k-2)+\alpha\binom{k}{2}\mathcal{O}(p^2k^4))^2}\longrightarrow0
\end{align*}
as $N \to \infty$. Moreover,
$\mathbb{P}\left(X_3(k)\neq0\right)\leq Mp_3(k)= M\left(\binom{k}{3}p^3+\mathcal{O}(k^4p^4)\right)\longrightarrow0
$
as $N$ and $k\leq (1+\delta ) \log(N)$ tend to infinity.
The complementary sets
$$\Bigg\lbrace\frac{X_1(k)}{\alpha k \frac{N}{\log(N)}}\in(1-\delta,1+\delta)\Bigg\rbrace,\quad \Bigg\lbrace\frac{X_2(k)}{\alpha \binom{k}{2}}\in(1-\delta,1+\delta)\Bigg\rbrace,\quad\lbrace X_3(k)=0\rbrace
$$
are denoted by $B_\delta(k)$, $C_\delta(k)$ and $D(k)$.

(3)
For each $n\in\mathbb{N}$, $n\leq M-1$ and arbitrary $i>k$, we estimate by a union bound
\begin{align*}
\mathbb{P}\left[\sum_{\mu:\sum_{j\leq k}\vert\xi^\mu_j\vert=2}\quad\sum_{j\leq k}\vert\xi^\mu_j\xi_i^\mu\vert> 0\Big\vert X_2(k)=n\right]\leq n p.
\end{align*}
This yields
\begin{align}\label{X_2 Abschaetzung Kapitel 4}
\max_{\substack{k,n\in\mathbb{N}:k/\log(N),\\n/(\alpha\binom{k}{2})\in(1-\delta,1+\delta)}}&\mathbb{P}\left[\sum_{\mu:\sum_{j\leq k}\vert\xi^\mu_j\vert=2}\sum_{j\leq k}\vert\xi^\mu_j\xi_i^\mu\vert> 0\Big\vert  X_2(k)=n\right]
\bigskip\leq
(1+\delta)^3 \alpha\frac{\log(N)^3}{2N}.
\end{align}
This means that neuron $i>k$ is not activated in any message with more than one of the activated neurons of message 1, with high probability.
The event
$\lbrace\sum_{\mu:\sum_{j\leq k}\vert\xi^\mu_j\vert=2}\sum_{j\leq k}\vert\xi^\mu_j\xi_i^\mu\vert= 0 \rbrace
$
is denoted by $C(k,i)$.

(4)
For $i>k$ let $X_4(k,i)$ be defined as
$X_4(k,i):=\sum_{\mu=2}^M\mathds{1}_{\lbrace\sum_{j\leq k}\vert\xi^\mu_j\vert=0\rbrace}\mathds{1}
_{\lbrace\xi^\mu_i\neq0\rbrace}.
$
Conditionally on $\mathcal{F}_k^N=\sigma(\xi^\mu_j,\mu\geq2, j\leq k)$ the random variable $X_4(k,i)$ is Binomially distributed with parameters $M-1-X_1(k)-X_2(k)-X_3(k)$ and $p$. The conditional probability of
$$E_\delta(k,i):=\Bigg\lbrace\frac{X_4(k,i)\log(N)}{\alpha N}\in(1-\delta,1+\delta)\Bigg\rbrace,
$$
given the intersection of the sets $B_\delta(k)$, $C_\delta(k)$, $D(k)$
is at least
\begin{align}\label{BEG E}
&\mathbb{P}\left(E_\delta(k,i)\vert B_\delta(k)\cap C_\delta(k)\cap D(k)\right)
\geq
\min_{B_\delta(k)\cap C_\delta(k)\cap D(k)}\mathbb{P}( E_\delta(k,i)\big\vert \mathcal{F}_k^N)\nonumber\\
=&\min_{\substack{m,n\in\mathbb{N}:m\log(N)/(\alpha k N),\\n/(\alpha \binom{k}{2} )\in(1-\delta,1+\delta)}}\mathbb{P}\left(E_\delta(k,i)\vert X_1(k)=m, X_2(k)=n, X_3(k)=0\right).
\end{align}
Conditionally on $\lbrace X_1(k)=m, X_2(k)=n, X_3(k)=0\rbrace$, $X_4(k,i)$ is Binomially distributed with parameters $M-1-m-n$ and $p$; applying the exponential Chebyshev inequality gives for $t>0$
 \begin{align}\label{BEG X4 E(k,i) Teil 1}
&1-\max_{\substack{m,n\in\mathbb{N}:m\log(N)/(\alpha k N),\\n/(\alpha \binom{k}{2} )\in(1-\delta,1+\delta)}}\mathbb{P}\left[\frac{X_4(k,i)}{Mp}\geq1+\delta\Big\vert X_1(k)=m, X_2(k)=n, X_3(k)=0\right]\nonumber\\
\geq&1-\max_{\substack{m,n\in\mathbb{N}:m\log(N)/(\alpha k N),\\n/(\alpha \binom{k}{2} )\in(1-\delta,1+\delta)}}\exp[-t(1+\delta)Mp](1-p+pe^t)^{M-1-m-n}\nonumber\\
\geq&1-\exp[-t(1+\delta)Mp](1-p+pe^t)^{M}\geq1-\exp[(-\log(1+\delta)(1+\delta)+\delta) Mp],
\end{align}
as well as
 \begin{align}\label{BEG X4 E(k,i) Teil 2}
&1-\max_{\substack{m,n\in\mathbb{N}:m\log(N)/(\alpha k N),\\n/(\alpha \binom{k}{2} )\in(1-\delta,1+\delta)}}\mathbb{P}\left[\frac{X_4(k,i)}{Mp}\leq1-\delta\Big\vert X_1(k)=m, X_2(k)=n, X_3(k)=0\right]\nonumber\\
\geq&1-\exp[t(1-\delta)Mp](1-p+pe^{-t})^{M-1-(1+\delta)\alpha\left( Nk/\log(N)+\binom{k}{2}\right)}\nonumber\\
\geq&1-\exp[(-\log(1-\delta)(1-\delta)-\delta) Mp]\exp\left[\delta p+ \delta(1+\delta)\alpha \left(k+\binom{k}{2} p\right)\right].
\end{align}
In particular, $N\cdot \mathbb{P}(E_\delta(k,i)\vert B_\delta(k)\cap C_\delta(k)\cap D(k))\longrightarrow0$.

(5)
As penultimate point, we observe that $X_5(k,i):=\sum_{\mu:\sum_{j\leq k}\vert\xi^\mu_j\vert=1}\vert\xi^\mu_i\vert,
$
is Binomially distributed with parameters $p$ and $X_1(k)$ conditionally on $\mathcal{F}_k^N$. Assuming that $k\leq (1+\delta) \log(N)$ and that $X_1(k)\leq (1+\delta)\alpha (kN/\log(N))$, the random variable $X_5(k,i)$ is asymptotically Poisson distributed with Parameter $\lambda=pX_1(k)$; we denote this distribution by $\pi_\lambda$. Moreover, by \cite{LeCam} the total variation distance of the latter two distributions is for some $i>k$ at most
$$\sum_{m=0}^\infty\Bigg\vert \mathbb{P}\left(\sum_{\mu:\sum_{j\leq k}\vert \xi^\mu_j\vert =1}\vert\xi^\mu_i\vert=m \Big\vert \mathcal{F}_k^N\right) - \pi_{pX_1(k)}(m)\Bigg\vert\leq2p^2X_1(k).
$$

(6)
Finally, the events
$\Bigg\lbrace \sum_{\mu=2}^M\sum_{j\leq k}\xi^\mu_i\xi^\mu_j+\frac{1}{(1-p)^2}\eta_i^\mu\eta_j^\mu< \gamma\log(N)\Bigg\rbrace,\quad i>k,
$
are conditionally independent, given $\mathcal{F}_k^N$.

Now we are ready to prove the desired statement. First, we estimate
\begin{align}\label{BEG k upper bound}
&\mathbb{P}\left(\exists i\leq N: T_i(\xi^1)\neq0\right)\geq\mathbb{P}\left(\exists i\leq N:\xi^1_i=0, T_i(\xi^1)\neq0\right)\nonumber\\
\geq &\mathbb{P}(A_\delta)\min_{\substack{k\in\mathbb{N}:k/\log(N)\\\in(1-\delta,1+\delta)}}\mathbb{P}\left(\exists i\leq N:\xi^1_i=0, T_i(\xi^1)\neq0\vert \mathcal{Z}_k\right)\nonumber\\
=&\mathbb{P}(A_\delta)\min_{\substack{k\in\mathbb{N}:k/\log(N)\\\in(1-\delta,1+\delta)}}\mathbb{P}\left(\exists i>k:\sum_{\mu=2}^M\sum_{j\leq N}\xi^1_j\xi^\mu_i\xi^\mu_j+\frac{1}{(1-p)^2}\left(\xi^1_j\right)^2\eta_i^\mu\eta_j^\mu\geq\gamma\log(N)\Big\vert \mathcal{Z}_k\right)\nonumber\\
=&\mathbb{P}(A_\delta)\min_{\substack{k\in\mathbb{N}:k/\log(N)\\\in(1-\delta,1+\delta)}}\mathbb{P}\left(\exists i>k:\sum_{\mu=2}^M\sum_{j\leq k}\xi^\mu_i\xi^\mu_j+\frac{1}{(1-p)^2}\eta_i^\mu\eta_j^\mu\geq\gamma\log(N)\right);
\end{align}
because conditionally on $\mathcal{Z}_k$,
$$\left(\sum_{\mu=2}^M\sum_{j\leq N}\xi^1_j\xi^\mu_i\xi^\mu_j+\frac{1}{(1-p)^2}\left(\xi^1_j\right)^2\eta_i^\mu\eta_j^\mu,\thickspace i>k\right)\sim\left(\sum_{\mu=2}^M\sum_{j\leq k}\xi^\mu_i\xi^\mu_j+\frac{1}{(1-p)^2}\eta_i^\mu\eta_j^\mu,\thickspace i>k\right),
$$
as we saw in the first part of the proof.

Writing
$B_\delta\cap C_\delta\cap D(k):=B_\delta(k)\cap C_\delta(k)\cap D(k)
$
we can thus conclude
\begin{align}\label{BEG bedingt}
&\mathbb{P}\left(\exists i>k:\sum_{\mu=2}^M\sum_{j\leq k}\xi^\mu_i\xi^\mu_j+\frac{1}{(1-p)^2}\eta_i^\mu\eta_j^\mu\geq\gamma\log(N)\right)\nonumber\\
=&1-\mathbb{E}_{(\xi^\mu_j,j\leq k, \mu\geq 2)}\left[\mathbb{P}\left(\sum_{\mu=2}^M\sum_{j\leq k}\xi^\mu_N\xi^\mu_j+\frac{1}{(1-p)^2}\eta_N^\mu\eta_j^\mu<\gamma \log(N)\Big\vert \mathcal{F}_k^N\right)^{N-k}\right]\nonumber\\
\geq&1-\max_{\substack{B_\delta\cap C_\delta\\\cap D(k)}}\mathbb{P}\left(\sum_{\mu=2}^M\sum_{j\leq k}\xi^\mu_N\xi^\mu_j+\frac{1}{(1-p)^2}\eta_N^\mu\eta_j^\mu<\gamma \log(N)\Big\vert \mathcal{F}_k^N\right)^{N-k}-\mathbb{P}[(B_\delta\cap C_\delta\cap D(k))^c].
\end{align}
For every $k$ in the considered set, the probability of $B_\delta\cap C_\delta\cap D(k)$
tends to 1 and it suffices to analyse the other probability in the last line. We rewrite the sum
\begin{align*}
&\sum_{\mu=2}^M\sum_{j\leq k}\xi^\mu_N\xi^\mu_j+\frac{1}{(1-p)^2}\eta_N^\mu\eta_j^\mu\\
&=\sum_{\substack{\mu:\sum_{j\leq k}\vert\xi^\mu_j\vert=0}}\thickspace\sum_{j\leq k}\xi^\mu_N\xi^\mu_j+\frac{1}{(1-p)^2}\eta_N^\mu\eta_j^\mu+\sum_{\substack{\mu:\sum_{j\leq k}\vert\xi^\mu_j\vert=1}}\thickspace\sum_{j\leq k}\xi^\mu_N\xi^\mu_j+\frac{1}{(1-p)^2}\eta_N^\mu\eta_j^\mu\\
+&\sum_{\substack{\mu:\sum_{j\leq k}\vert\xi^\mu_j\vert=2}}\thickspace\sum_{j\leq k}\xi^\mu_N\xi^\mu_j+\frac{1}{(1-p)^2}\eta_N^\mu\eta_j^\mu+\sum_{\substack{\mu>2:\sum_{j\leq k}\vert\xi^\mu_j\vert>2}}\thickspace\sum_{j\leq k}\xi^\mu_N\xi^\mu_j+\frac{1}{(1-p)^2}\eta_N^\mu\eta_j^\mu.
\end{align*}
On $B_\delta(k)\cap C_\delta(k)\cap C(k,i)\cap D(k)\cap E_\delta (k,i)$, the last sum is zero; the penultimate sum is
\begin{align}\label{BEG Summe 2}
&\sum_{\substack{\mu:\sum_{j\leq k}\vert\xi^\mu_j\vert=2}}\thickspace\sum_{j\leq k}\xi^\mu_N\xi^\mu_j+\frac{1}{(1-p)^2}\eta_N^\mu\eta_j^\mu=\left(-\frac{2p}{1-p}+(k-2)\frac{p^2}{(1-p)^2}\right) X_2(k)
\end{align}
and therefore tends to 0 on this set, because $X_2(k)/(\alpha \binom{k}{2})\in(1-\delta,1+\delta)$.

For the first sum, note that
$\sum_{\mu=2}^M\mathds{1}_{\sum_{j\leq k}\vert\xi^\mu_j\vert=0}=M-1-X_1(k)-X_2(k)-X_3(k)
$
and thus
\begin{align}\label{BEG Summe null}
&\sum_{\substack{\mu:\sum_{j\leq k}\vert\xi^\mu_j\vert=0}}\thickspace\sum_{j\leq k}\xi^\mu_N\xi^\mu_j+\frac{\eta_N^\mu\eta_j^\mu}{(1-p)^2}\nonumber\\
=&\left[M-1-X_1(k)-X_2(k)-X_3(k)-X_4(k,N)\right]k\frac{p^2}{(1-p)^2}-X_4(k,N)k\frac{p}{1-p}\nonumber\\
\geq&\frac{Mkp^2}{(1-p)^2}-X_4(k,N)k\frac{p}{1-p}\geq\alpha k \frac{1}{(1-p)^2}-\alpha k \frac{1+\delta}{1-p}
\end{align}
on the set we consider.

The remaining second sum is equal to \begin{align}\label{BEG Summe Eins}
&\sum_{\substack{\mu:\sum_{j\leq k}\vert\xi^\mu_j\vert=1}}\thickspace\sum_{j\leq k}\xi^\mu_N\xi^\mu_j+\frac{\eta_N^\mu\eta_j^\mu}{(1-p)^2}
=
\sum_{\substack{\mu:\sum_{j\leq k}\vert\xi^\mu_j\vert\\=1,\vert\xi^\mu_N\vert=1}}(1-p\frac{k-1}{1-p}\thinspace+\sum_{j\leq k}\xi^\mu_N\xi^\mu_j)+\sum_{\substack{\mu:\sum_{j\leq k}\vert\xi^\mu_j\vert\\=1,\vert \xi^\mu_N\vert=0}}\sum_{j\leq k}\frac{\eta_N^\mu\eta_j^\mu}{(1-p)^2}.
\end{align}
The right hand side of the last line in (\ref{BEG Summe Eins}) is at least
\begin{align}\label{BEG Summe Eins Teil 1}
\sum_{\substack{\mu:\sum_{j\leq k}\vert\xi^\mu_j\vert\\=1,\vert \xi^\mu_N\vert=0}}\thickspace\sum_{j\leq k}\frac{\eta_N^\mu\eta_j^\mu}{(1-p)^2}=\left(X_1(k)-X_5(k,N)\right)\left((k-1)\frac{p^2}{(1-p)^2}-\frac{p}{1-p}\right)\geq X_1(k)\frac{-p}{1-p}
\end{align}
because $X_1(k)\geq X_5(k,N).$
 It remains to examine the left hand side of the last line in (\ref{BEG Summe Eins}); this is the last step in the proof.

Due to (\ref{BEG k upper bound}) and (\ref{BEG bedingt}), our goal is to show that
\begin{align*}
\lim_{N\rightarrow\infty}\max_{\substack{k/\log(N)\in\\(1-\delta,1+\delta)}}\max_{B_\delta\cap C_\delta\cap D(k)}\left[1-\mathbb{P}\left(\sum_{\mu=2}^M\sum_{j\leq k}\xi^\mu_N\xi^\mu_j+\frac{1}{(1-p)^2}\eta_N^\mu\eta^\mu_j\geq \gamma\log(N)\big\vert\mathcal{F}_k^N\right)\right]^{N-k}=0.
\end{align*}
This is fulfilled if
\begin{align}\label{Log BEG Bedingung}
&\lim_{N\rightarrow\infty}\min_{\substack{k/\log(N)\in\\(1-\delta,1+\delta)}}\min_{B_\delta\cap C_\delta\cap D(k)}\frac{\log\left[\mathbb{P}\left(\sum_{\mu=2}^M\sum_{j\leq k}\xi^\mu_N\xi^\mu_j+\frac{1}{(1-p)^2}\eta_N^\mu\eta^\mu_j\geq \gamma\log(N)\big\vert\mathcal{F}_k^N\right)\right]}{\log(N)}>-1.
\end{align}
Due to (\ref{BEG Summe 2}), (\ref{BEG Summe null}), (\ref{BEG Summe Eins}) and (\ref{BEG Summe Eins Teil 1}), on $B_\delta(k)\cap C_\delta(k)\cap C(k,i)\cap D(k)\cap E_\delta (k,i)$, the sum under consideration is at least
\begin{align}\label{BEG auf Schnitt}
&\sum_{\mu=2}^M\sum_{j\leq k}\xi^\mu_N\xi^\mu_j+\frac{\eta_N^\mu\eta^\mu_j}{(1-p)^2}
\geq \left(-\frac{2p}{1-p}+(k-2)\frac{p^2}{(1-p)^2}\right) (1-\delta)\alpha \binom{k}{2}+\alpha k\frac{1}{(1-p)^2}\nonumber\\
&-\alpha k \frac{1+\delta}{1-p}-(1+\delta) \alpha k \frac{N}{\log(N)} \frac{p}{1-p}+\sum_{\substack{\mu:\sum_{j\leq k}\vert\xi^\mu_j\vert\\=1,\vert\xi^\mu_N\vert=1}}\left(1-p\frac{k-1}{1-p}\thinspace+\sum_{j\leq k}\xi^\mu_N\xi^\mu_j\right)\nonumber\\
\geq& \alpha k \frac{-1-2\delta}{1-p}+\mathcal{O}(k^2p)+
\sum_{\substack{\mu:\sum_{j\leq k}\vert\xi^\mu_j\vert\\=1,\vert\xi^\mu_N\vert=1}}\left(1-p\frac{k-1}{1-p}\thinspace+\sum_{j\leq k}\xi^\mu_N\xi^\mu_j\right).
\end{align}
We have seen in 4., (\ref{BEG E}), (\ref{BEG X4 E(k,i) Teil 1}) and (\ref{BEG X4 E(k,i) Teil 2}), using $\varepsilon_\delta=\min(\log(1+\delta)(1+\delta)+\delta;\log(1-\delta)(1-\delta)-\delta)>0$, that
$$\max_{B_\delta(k)\cap C_\delta(k)\cap D(k)}\mathbb{P}(E_\delta(k,N)^c \vert \mathcal{F}_k^N)\leq\exp\left[-\varepsilon_\delta \alpha \frac{N}{\log(N)}+\mathcal{O}(k)\right]
$$
and in (\ref{X_2 Abschaetzung Kapitel 4}), that
$$\max_{B_\delta(k)\cap C_\delta(k)\cap D(k)}\mathbb{P}(C(k,N)^c \vert \mathcal{F}_k^N)\leq (1+\delta)^3 \alpha \frac{\log(N)^3}{N}.
$$
So we can conclude for the probability in (\ref{Log BEG Bedingung}), using additionally (\ref{BEG auf Schnitt}),
\begin{align}\label{BEG P Rest}
&\min_{B_\delta\cap C_\delta\cap D(k)}\mathbb{P}\left(\sum_{\mu=2}^M\sum_{j\leq k}\xi^\mu_N\xi^\mu_j+\frac{1}{(1-p)^2}\eta_N^\mu\eta^\mu_j\geq \gamma\log(N)\big\vert\mathcal{F}_k^N\right)\nonumber\\
\geq&\min_{B_\delta(k)}\mathbb{P}\Bigg(
\sum_{\substack{\mu:\sum_{j\leq k}\vert\xi^\mu_j\vert\\=1,\vert\xi^\mu_N\vert=1}}\left(1-p\frac{k-1}{1-p}\thinspace+\sum_{j\leq k}\xi^\mu_N\xi^\mu_j\right)\geq \gamma\log(N)+\frac{\alpha k (1+2\delta)}{1-p}+\mathcal{O}(k^2p)\Big\vert\mathcal{F}_k^N\Bigg)\nonumber\\
&\quad\quad\quad\quad-\exp\left[-\varepsilon_\delta \alpha \frac{N}{\log(N)}+\mathcal{O}(k)\right]-(1+\delta)^3 \alpha \frac{\log(N)^3}{N}.
\end{align}
Finally we consider the behaviour of the summand on the left hand side in the last line of (\ref{BEG Summe Eins}), that is,
\begin{align}\label{BEG Summe Eins Teil 2}
\sum_{\substack{\mu:\sum_{j\leq k}\vert\xi^\mu_j\vert\\=1,\vert\xi^\mu_N\vert=1}}\left(1-p\frac{k-1}{1-p}\thinspace+\sum_{j\leq k}\xi^\mu_N\xi^\mu_j\right).
\end{align}
Conditionally on $\xi^\mu_j,j\leq k,\mu\geq 2$ and $\vert \xi^\mu_N\vert,\mu\geq 2$, it is distributed as a sum of $X_5(k,N)$ independent and identically distributed random variables $Z_n(p,k),$ $n\geq 1$, with distribution
$\mathbb{P}\left(Z_n(p,k)=-p\frac{k-1}{1-p} \right)=\mathbb{P}\left(Z_n(p,k)=2-p\frac{k-1}{1-p}\right)=\frac{1}{2}.
$
$X_5(k,N)$ has still to be determined. Given $X_1(k)$, it is Binomially distributed with parameters $X_1(k)$ and $p$. Consequently, the sum in (\ref{BEG Summe Eins Teil 2}) is, given $X_1(k)$, distributed as a random sum of random variables $Z_n(p,k)$, $n\geq 1$, and length determined by a $\mathrm{Bin}(X_1(k),p$)-distributed random variable $R_{X_1(k)}$, such that $R_{X_1(k)},Z_1(p,k),\ldots$ are independent. This Binomial distribution can be approximated by a Poisson distribution; the subsequent computations are therefore made for a Poisson distribution, instead.
For each $\varepsilon\geq0$ let $Z_n(\varepsilon)$, $n\geq 1$, be independent and identically distributed such that
$$\mathbb{P}(Z_n(\varepsilon)=-\varepsilon)=\mathbb{P}(Z_n(\varepsilon)=2-\varepsilon)=\frac{1}{2}.
$$
Let now $Y_{\lambda}$ denote a Poisson random variable with parameter $\lambda$. For independent sets of random variables $(Y_{\lambda k},Z_n(\varepsilon),n\geq1)$, respectively $(Y_{\lambda}^{r},Z_{n}^r(\varepsilon),r=1,\ldots,k,n\geq1)$,  with $Y_{\lambda}^{r}\sim\mathrm{Poi}(\lambda)$, $Z_n^r(\varepsilon)\sim Z_1(\varepsilon)$, for each $r$ and $n\geq1$, $\sum_{n=1}^{Y_{\lambda k}}Z_n(\varepsilon)\mbox{ and }\sum_{r=1}^k\sum_{n=1}^{Y_{\lambda }^{r}}Z_{n}^r(\varepsilon).
$ have the same distribution.
The $\sum_{n=1}^{Y_{\lambda}^{r}}Z_{n}^r(\varepsilon)$, $1\leq r\leq k$, then are independent. \\By Cram\'er's theorem (see \cite{dembozeitouni}, Theorem 2.2.3), if
$x>\mathbb{E}\left(\sum_{n=1}^{Y_{\lambda}^1}Z_{n}^1(\varepsilon)\right)$ and $ \mathbb{E}\left(\exp t \sum_{n=1}^{Y_\lambda^1}Z_n^1(\varepsilon)\right)<\infty, t\in\mathbb{R}
$
$$\lim_{k\rightarrow\infty}\frac{1}{k}\log\left(\mathbb{P}\left(\sum_{n=1}^{Y_{\lambda k}}Z_n(\varepsilon)\geq x k\right)\right)=\lim_{k\rightarrow\infty}\frac{1}{k}\log\left(\mathbb{P}\left(\sum_{r=1}^k\sum_{n=1}^{Y_{\lambda}^{r}}Z_{n}^r(\varepsilon)\geq x k\right)\right)=-\Lambda_{\lambda,\varepsilon}^*(x)
$$
with
$$\Lambda^*_{\lambda,\varepsilon}(x)=\sup_{t\in \mathbb{R}}\thickspace tx-\log\left[\mathbb{E}\left(\exp t \sum_{n=1}^{Y_\lambda^1}Z_n^1(\varepsilon)\right)\right].
$$
By Wald's identity, the expectation of this sum is
$$\mathbb{E}\left(\sum_{n=1}^{Y_{\lambda}^1}Z_{n}^1(\varepsilon)\right)=\mathbb{E}\left(Y_{\lambda}^1\right)\mathbb{E}(Z_1^1(\varepsilon))=\lambda (1-\varepsilon).
$$
To compute $\Lambda^*_{\lambda,\varepsilon}$, consider the moment generating function of $\sum_{n=1}^{Y_{\lambda}^1}Z_{n}^1(\varepsilon)$:
\begin{align*}
\Lambda_{\lambda,\varepsilon}(t):=&\mathbb{E}\left(\exp t \sum_{n=1}^{Y_{\lambda}^1}Z_{n}^1(\varepsilon)\right)=\sum_{m=0}^\infty
e^{-\lambda}\frac{\lambda^m}{m!}\frac{1}{2^m}e^{-\varepsilon tm}\left(1+e^{2t}\right)^m
=
e^{- \lambda+ \frac{\lambda}{2}e^{-\varepsilon t}(e^{2t}+1)}.
\end{align*}
Then
$$\Lambda^*_{\lambda,\varepsilon}(x)=\sup_{t\in\mathbb{R}}\thickspace \left(t x-\log\left[
\mathbb{E}\left(\exp t
\sum_{n=1}^{Y_{\lambda}^1}Z_{ n}^1(\varepsilon)\right)\right]\right)=\sup_{t\in\mathbb{R}}\thickspace \left(t x+ \lambda- \frac{\lambda}{2}e^{-\varepsilon t}(e^{2t}+1)\right).
$$
For fixed $x>\lambda$ we are interested in $\lim_{\varepsilon\searrow0}\Lambda^*_{\lambda,\varepsilon}(x).$ Observe that for $t\leq 0$,
$$t x+ \lambda- \frac{\lambda}{2}e^{-\varepsilon t}(e^{2t}+1)\leq t x+ \lambda- \frac{\lambda}{2}(e^{2t}+1)=tx-\log\left(\Lambda_{\lambda,0}(t)\right).
$$
For fixed $\lambda$ and $x>\lambda$ define the continuous and differentiable functions $\psi_\varepsilon:\mathbb{R}\rightarrow\mathbb{R}$,
$$\psi_\varepsilon(t):=t x+ \lambda- \frac{\lambda}{2}e^{-\varepsilon t}(e^{2t}+1).
$$ We easily compute for $0<\varepsilon<2$
$$\psi'_\varepsilon(t)= x+\varepsilon\frac{\lambda}{2}e^{-\varepsilon t}-(2-\varepsilon)\frac{\lambda}{2}e^{(2-\varepsilon)t};\quad\lim_{t\rightarrow\infty}\psi'_\varepsilon(t)=-\infty;\quad\lim_{t\rightarrow-\infty}\psi'_\varepsilon(t)=\infty.
$$
Moreover, for this choice of $\varepsilon$,
$\psi''_\varepsilon(t)<0$ on $\mathbb{R}$. Hence
$\sup_{t\in\mathbb{R}}\psi_\varepsilon(t)=\psi_\varepsilon(t_\varepsilon),
$
with the unique root $t_\varepsilon$ of $\psi_\varepsilon'(t)$. For each $t\leq0$, $\varepsilon\geq0$ we have
$$\psi'_\varepsilon(t)\geq x+\varepsilon \frac{\lambda}{2}-(2-\varepsilon)\frac{\lambda}{2} \geq x-\lambda>0
$$
and for each $t>\log(1+2x/\lambda)$ and $0\leq \varepsilon\leq1$ we have
$\psi'_\varepsilon(t)\leq x+\frac{\lambda}{2}-\frac{\lambda}{2}e^{t}<0.$
So for $x>\lambda$, $0\leq \varepsilon\leq1$ we obtain
$\sup_{t\in\mathbb{R}}\psi_\varepsilon(t)=\sup_{t\in[0,\log(1+2x/\lambda)]}\psi_\varepsilon(t).
$
On $[0,\log(1+2x/\lambda)]$, $\psi_\varepsilon$ converges uniformly to $\psi_0$ as $\varepsilon\rightarrow0$. So we can conclude that
$\lim_{\varepsilon\searrow0}t_\varepsilon=t_0$ and $\lim_{\varepsilon\searrow0}\psi_\varepsilon(t_\varepsilon)=\psi_0(t_0).$
We finally compute  $\Lambda^*_{\lambda,0}$:
\begin{align*}
\Lambda^*_{\lambda,0}(x)=&\sup_{t\in\mathbb{R}}\thickspace t x-\log\Bigl[
\mathbb{E}\bigl(\exp t
\sum_{n=1}^{Y_{\lambda}^1}Z_{ n}^1(0)\bigr)\Bigr]
=\frac{1}{2}\log\bigl(\frac{x}{\lambda}\bigr)x-\frac{\lambda}{2}\bigl(\frac{x}{\lambda}-1\bigr),
\end{align*}
using $t_0=\frac{1}{2}\log\left(\frac{x}{\lambda}\right)$.

Altogether, we deduce for $\varepsilon>\frac{p(k-1)}{(1-p)}$, using in the first step (5), and that $\alpha k (1+2\delta)/(1-p)=\alpha k (1+2\delta)+\mathcal{O}(kp)$,
\begin{align*}
&\min_{B_\delta(k)}\mathbb{P}\Bigg(
\sum_{\substack{\mu:\sum_{j\leq k}\vert\xi^\mu_j\vert\\=1,\vert\xi^\mu_N\vert=1}}\left(1-p\frac{k-1}{1-p}\thinspace+\sum_{j\leq k}\xi^\mu_N\xi^\mu_j\right)\geq \gamma\log(N)+\frac{\alpha k (1+2\delta)}{1-p}+\mathcal{O}(k^2p)\Big\vert\mathcal{F}_k^N\Bigg)\\
\geq&\min_{B_\delta(k)}\mathbb{P}\left(
\sum_{n=1}^{Y_{p\cdot X_1(k)}}Z_n(k,p)\geq \gamma\log(N)+\alpha k (1+2\delta)+\mathcal{O}(k^2p)\Big\vert\mathcal{F}_k^N\right)-2p^2 X_1(k)\\
\geq&\min_{\rho\in(1-\delta,1+\delta)}\mathbb{P}\left(
\sum_{n=1}^{Y_{\alpha \rho k}}Z_n(\varepsilon)\geq \gamma\log(N)+\alpha k (1+2\delta)+\mathcal{O}(k^2p)\right)-2p\alpha \rho k.
\end{align*}
As the second summand is negligible in view of  \eqref{Log BEG Bedingung} and
(\ref{BEG P Rest}), it suffices to show
$$\liminf_{N\rightarrow\infty}\min_{\substack{k\in\mathbb{N}:k/\log(N)\\\in(1-\delta,1+\delta)}}\min_{\substack{\rho\in(1-\delta,\\1+\delta)}}\frac{\log \left[\mathbb{P}\left(
\sum_{n=1}^{Y_{\alpha \rho k}}Z_n(\varepsilon)\geq \gamma\log(N)+\alpha k (1+2\delta)+\mathcal{O}(k^2p)\right)\right]}{\log(N)}>-1
$$
to obtain convergence of the probability of instability of $\xi^1$ to 1. In our analysis of $\sum_{n=1}^{Y_{\lambda k}}Z_n(\varepsilon)$ we saw that
\begin{align*}
&\lim_{k\rightarrow\infty}\frac{\log\left(\mathbb{P}\left[\sum_{n=1}^{Y_{\rho \alpha k}}Z_n(\varepsilon)\geq(\frac{\gamma}{1-\delta}+ \alpha (1+2\delta))k \right]\right)}{k}
=-\Lambda^*_{\rho \alpha, \varepsilon}\left(\frac{\gamma}{1-\delta}+ \alpha (1+2\delta)\right)
\end{align*}
if $\frac{\gamma}{1-\delta}+\alpha (1+2\delta)>\rho\alpha(1-\varepsilon)$. We saw that the maximal argument $t_\varepsilon$ of $\psi_\varepsilon$ was positive; one easily sees that $tx-\log(\Lambda_{\lambda,\varepsilon}(t))$ is decreasing in $\lambda$ for fixed $\varepsilon<1/2$ and $t>0$, and therefore that
\begin{align*}
&\min_{\substack{\rho\in(1-\delta,\\1+\delta)}}\thinspace\liminf_{N\rightarrow\infty}\thinspace\min_{\substack{k\in\mathbb{N}:k/\log(N)\\\in(1-\delta,1+\delta)}}\frac{\log \left[\mathbb{P}\left(
\sum_{n=1}^{Y_{\alpha \rho k}}Z_n(\varepsilon)\geq \gamma\log(N)+\alpha k (1+2\delta)\right)\right]}{\log(N)}\\
\end{align*}
\begin{align*}
\geq &(1-\delta)\min_{\substack{\rho\in(1-\delta,\\1+\delta)}}\thinspace\liminf_{N\rightarrow\infty}\thinspace\min_{\substack{k\in\mathbb{N}:k/\log(N)\\\in(1-\delta,1+\delta)}}\frac{\log \left[\mathbb{P}\left(
\sum_{n=1}^{Y_{\alpha \rho k}}Z_n(\varepsilon)\geq \frac{\gamma}{1-\delta}k+\alpha k (1+2\delta)\right)\right]}{k}\\
\geq &(1-\delta)\left(-\Lambda^*_{(1-\delta) \alpha ,\varepsilon}\left(\frac{\gamma}{1-\delta}+ \alpha (1+2\delta)\right)\right).
\end{align*}
Due to our considerations on $Z_n(\varepsilon)$, there is some $\varepsilon'>0$ such that for all $0\leq \varepsilon<\varepsilon'$,
$$-(1-\delta)\Lambda^*_{(1-\delta) \alpha ,\varepsilon}\left(\frac{\gamma}{1-\delta}+ \alpha (1+2\delta)\right)>-1,
 \mbox{ if }
-(1-\delta)\Lambda^*_{(1-\delta) \alpha ,0}\left(\frac{\gamma}{1-\delta}+ \alpha (1+2\delta)\right)>-1.
$$
Since moreover $x \mapsto \Lambda^*_{(1-\delta) \alpha ,0}(x)$ is continuous,
this suffices to guarantee the convergence of $\mathbb{P}(\exists i: T_i(\xi^1) \neq \xi_i^1)$ to 1 (note that the term $\mathcal{O}(k^2p)$ in $$\mathbb{P}\left(
\sum_{n=1}^{Y_{\alpha \rho k}}Z_n(\varepsilon)\geq \gamma\log(N)+\alpha k (1+2\delta)+\mathcal{O}(k^2p)\right)$$ is neglibile).

Using the function $f_{\alpha,  (1-\delta)^2}(x)=1+\frac{1}{2}(1-\delta)^2\alpha (-x\log(x)+x-1)$ of the first part of the proof and $\bar{x}_{\alpha, \gamma, \delta}:=\frac{\gamma}{\alpha (1-\delta)^2}+\frac{1+2\delta}{1-\delta},$  a sufficient condition for $\mathbb{P}(\exists i: T_i(\xi^1)\neq \xi^1_i) \to 1$ is
$f_{\alpha, (1-\delta)^2}(\bar{x}_{\alpha, \gamma, \delta})>0.
$
Finally, if $\alpha$ and $\gamma$ fulfill
\begin{align}\label{upper bound Kapitel 4 Funktion}
f_{\alpha, 1}(\bar{x}_{\alpha, \gamma, 0})=1+\frac{1}{2}\alpha\left[-\log\left(1+\frac{\gamma}{\alpha}\right)(1+\gamma/\alpha)+\frac{\gamma}{\alpha}\right]>0,
\end{align}
$\delta>0$ can be chosen small enough to obtain
$f_{\alpha, (1-\delta)^2}(\bar{x}_{\alpha, \gamma,\delta})>0$. As on page \pageref{page 118}, we observe that condition (\ref{upper bound Kapitel 4 Funktion}) is fulfilled if $g_\gamma(\bar{x}_{\alpha, \gamma,0})>0$, with $\bar{x}_{\alpha, \gamma,0}=1+\frac{\gamma}{\alpha}$ and $g_\gamma(x)=x\left(1+\frac{2}{\gamma}-\log(x)\right)-1-\frac{2}{\gamma}$. Since $g_\gamma$ is positive on $(1,x^*_\gamma)$, with the root $x_\gamma^*$ of $g_\gamma$ in $(1,\infty)$, the precedent condition holds if
$\bar{x}_{\alpha, \gamma,0}\in(1,x^*_\gamma).
$
As $\bar{x}_{\alpha, \gamma,0}>1$, this is true if
$ \alpha > \frac{\gamma}{x_\gamma^*-1}.$
We conclude that $\lim_{N\rightarrow\infty}\mathbb{P}(\exists i\leq N:T_i(\xi^1)\neq \xi_i^1)=1$ holds if $\alpha>\frac{\gamma}{x^*_\gamma-1},$ as stated in the theorem.
\end{proof}

To find the optimal capacity we show:
\begin{proposition}\label{Remark BEG Gamma}
The threshold variable $\gamma$ cannot be chosen from $(2,\infty)$.
\end{proposition}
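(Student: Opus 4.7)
The plan is to show that for any $\gamma > 2$, an arbitrary active neuron of $\xi^1$ is mapped to $0$ by the dynamics $T$ with probability tending to one, regardless of the value of $\alpha > 0$. This contradicts the stability conclusion of Theorem~\ref{BEG Kapazitaet} and thereby rules out $\gamma \in (2,\infty)$.

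Condition on $\mathcal{Z}_k \cap \{\xi^1_1 = 1\}$, with the active neurons being $1,\ldots,k$. As computed at the beginning of the second-type-error analysis in the proof of Theorem~\ref{BEG Kapazitaet}, the $\mu=1$ contributions separate out cleanly to give
\[
S_1(\xi^1) + \theta_1(\xi^1) \;=\; 2(k-1) \;+\; R,
\]
where
\[
R \;:=\; \sum_{j=2}^{k}\sum_{\mu=2}^{M}\xi_1^\mu \xi_j^\mu \;+\; \frac{1}{(1-p)^2}\sum_{j=2}^{k}\sum_{\mu=2}^{M} \eta_1^\mu \eta_j^\mu
\]
is the noise coming from the other patterns. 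Since $\gamma > 2$, fix $\delta \in (0,(\gamma-2)/4)$. On the high-probability event $A_\delta$ we have $k \le (1+\delta)\log N$, so the deterministic signal satisfies
\[
2(k-1) \;\le\; 2(1+\delta)\log N \;\le\; \gamma\log N - \frac{\gamma-2}{2}\log N,
\]
and the noise $R$ would have to overcome a deficit of order $\log N$ in order for the Heaviside threshold to be crossed.

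I would then bound $R$ by the same exponential Chebyshev argument used to derive \eqref{exp EW BEG komplett}: both $R$ and $-R$ admit a moment generating function estimate of the form $\exp\!\bigl(\alpha k\,(\tfrac12 e^{2t} - \tfrac12 - t/(1-p)) + o(1)\bigr)$, and optimizing in $t$ yields $\mathbb{P}(|R| \geq c\log N \mid \mathcal{Z}_k) \to 0$ for every constant $c > 0$. A plain conditional variance bound $\operatorname{Var}(R \mid \mathcal{Z}_k) = O(\alpha k) = O(\alpha \log N)$ is already enough here. In addition, $S_1(\xi^1) = (k-1) + R_S$ is positive with high probability, so that $|S_1(\xi^1)| = S_1(\xi^1)$ and no cancellation issue arises. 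Combining these estimates gives
\[
|S_1(\xi^1)| + \theta_1(\xi^1) - \gamma\log N \;<\; 0
\]
with probability tending to one, whence $T_1(\xi^1) = 0 \neq \xi^1_1$ and $\xi^1$ fails to be a fixed point.

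The argument contains no deep obstacle: the deficit $\tfrac{\gamma-2}{2}\log N$ is of order $\log N$, while the typical size of $R$ is only $O(\sqrt{\alpha\log N})$, so the gap is overwhelming. The only (minor) subtlety is that the conclusion must hold for every positive $\alpha$; since $\alpha$ appears in the noise bound only as a prefactor, shrinking $\alpha$ can only make the deviation estimate sharper. Consequently $\gamma > 2$ forces instability for every $\alpha > 0$, which is precisely the content of the proposition.
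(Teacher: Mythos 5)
Your argument is correct and takes essentially the same route as the paper: condition on $\mathcal{Z}_k$, isolate the $\mu=1$ signal $2(k-1)\le 2(1+\delta)\log N$, and show the noise from the other patterns cannot bridge the deficit $(\gamma-2-2\delta)\log N$, for every $\alpha>0$. The only (harmless) difference is that you note a plain variance/Chebyshev bound $\operatorname{Var}(R\mid\mathcal{Z}_k)=O(\alpha\log N)$ already suffices, whereas the paper reuses the exponential Chebyshev estimate \eqref{exp EW BEG komplett} and optimizes in $t$; since only vanishing of the probability is required, either concentration step closes the proof.
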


\begin{proof}
Suppose that $\gamma>2$. Choosing some $\delta< \frac{1}{2}(\gamma-2)$, we will see that,
$$\lim_{N\rightarrow\infty}\thinspace\max_{\substack{k\in\mathbb{N}:k/\log(N)\in\\(1-\delta,1+\delta)}}\mathbb{P}\left(T_1(\xi^1)\neq\xi^1_1\vert \mathcal{Z}_k\right)=1,
$$
independently of the choice of $\alpha$. Using (\ref{exp EW BEG komplett}), we easily conclude
\begin{align*}
&\max_{\substack{k\in\mathbb{N}:k/\log(N)\in\\(1-\delta,1+\delta)}}\mathbb{P}\left(T_1(\xi^1)=\xi^1_1\vert \mathcal{Z}_k\right)
\leq\max_{\substack{k\in\mathbb{N}:k/\log(N)\in\\(1-\delta,1+\delta)}}\mathbb{P}\left(\vert S_1(\xi^1)\vert+ \theta_1(\xi^1) \geq\gamma\log(N)\vert\mathcal{Z}_k\right)\\
\leq& \max_{\substack{k\in\mathbb{N}:k/\log(N)\in\\(1-\delta,1+\delta)}}\mathbb{P}\left[\left(\sum_{1<j\leq k}\sum_{\mu=2}^M\xi^\mu_1\xi^\mu_j+\frac{1}{(1-p)^2}\eta_1^\mu\eta_j^\mu\right)\geq\gamma\log(N)+2-2k\right]\\
\leq&\exp\left[-t\log(N)(\gamma-2-2\delta)\right]\exp\left[(1+\delta)\log(N)\alpha\left(\frac{1}{2} e^{2t}-\frac{1}{2}-t\right)+\mathcal{O}\left(p\log(N)^2\right)\right]\\
\leq&\exp\left[\log(N)\left((1+\delta)\alpha \frac{1}{2}\left(-w_{\gamma,\delta,\alpha}\log(w_{\gamma,\delta,\alpha})+w_{\gamma,\delta,\alpha}-1\right)\right)\right],
\end{align*}
with $w_{\gamma,\delta,\alpha}:=1+\frac{\gamma-2(1+\delta)}{(1+\delta)\alpha}$ and
after having used $t=\frac{1}{2}\log\left(w_{\gamma,\delta,\alpha}\right)>0$. The probability tends to 0.
\end{proof}
As a result we see that the critical value $\gamma^*=\sup \lbrace \gamma>0: \gamma\text{ is an admissible threshold}\rbrace=2$.
The corresponding capacity is immediately obtained as.
\begin{proposition}
With $\gamma^*=2$ the critical value $$\alpha^*=\sup\lbrace \alpha >0:  \alpha \text{ is an admissible capacity}\rbrace$$ is $\alpha^*=\frac{2}{x_2^*-1}\approx0.51.$
Here $x_2^*$ is the root of
$g_2(x)=x\left(2-\log(x)\right)-2$
in $(1,\infty)$.
%
\end{proposition}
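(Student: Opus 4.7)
The strategy is to combine the sharp capacity from Theorem~\ref{BEG Kapazitaet} with the obstruction from Proposition~\ref{Remark BEG Gamma} and then optimise over the threshold parameter $\gamma$. Theorem~\ref{BEG Kapazitaet} identifies, for every admissible $\gamma\in(0,2)$, the sharp critical capacity
$$\alpha(\gamma):=\frac{\gamma}{x_\gamma^*-1},$$
while Proposition~\ref{Remark BEG Gamma} rules out any $\gamma>2$. Thus $\gamma^*=2$ and the overall supremum reduces to the one-parameter optimisation $\alpha^*=\sup_{\gamma\in(0,2)}\alpha(\gamma)$.

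To evaluate this supremum I would first establish that $\gamma\mapsto x_\gamma^*$ is continuous on $(0,\infty)$. The function $g_\gamma$ is jointly smooth in $(\gamma,x)$ on $(0,\infty)^2$, and $x_\gamma^*$ lies on the decreasing branch of $g_\gamma$ (since $x_\gamma^*>\hat{x}_\gamma=e^{2/\gamma}$), so $\partial_x g_\gamma(x_\gamma^*)=2/\gamma-\log(x_\gamma^*)<0$. The implicit function theorem then yields that $\gamma\mapsto x_\gamma^*$ is $C^1$ on $(0,\infty)$; in particular $\alpha$ extends continuously to $\gamma=2$ with $\alpha(2)=2/(x_2^*-1)$. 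To conclude $\alpha^*=\alpha(2)$ I would show that $\alpha$ is strictly increasing on $(0,2]$. Implicit differentiation of $g_\gamma(x_\gamma^*)=0$, using $\partial_\gamma g_\gamma(x)=-(2/\gamma^2)(x-1)$, gives
$$\frac{d x_\gamma^*}{d\gamma}=\frac{-(2/\gamma^2)(x_\gamma^*-1)}{\log(x_\gamma^*)-2/\gamma}<0,$$
since the denominator is positive on the decreasing branch and the numerator is negative. Hence in $\alpha(\gamma)=\gamma/(x_\gamma^*-1)$ the numerator is increasing and the denominator is decreasing, so $\alpha$ is strictly increasing; combined with continuity at $\gamma=2$ this yields $\alpha^*=\alpha(2)$. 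A numerical solve of $g_2(x)=x(2-\log x)-2=0$ on $(1,\infty)$ gives $x_2^*\approx 4.92$ and hence $\alpha^*\approx 0.51$.

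The argument is essentially a one-variable calculus optimisation, made rigorous via the implicit function theorem, so there is no genuinely hard step. The only subtle point is bookkeeping: the admissibility condition of Theorem~\ref{BEG Kapazitaet} is the strict inequality $\alpha<\gamma/(x_\gamma^*-1)$ and Proposition~\ref{Remark BEG Gamma} rules out $\gamma>2$ but leaves $\gamma=2$ itself untreated, so the overall supremum is really taken over an open set of admissible pairs. Continuity of $\alpha(\gamma)$ at $\gamma=2$ is what guarantees that this open supremum equals the boundary value $\alpha(2)=2/(x_2^*-1)$.
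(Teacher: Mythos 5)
Your proposal is correct and follows the same route the paper takes, namely combining Theorem~\ref{BEG Kapazitaet} (sharp capacity $\gamma/(x_\gamma^*-1)$ for $0<\gamma<2$) with Proposition~\ref{Remark BEG Gamma} (no $\gamma>2$) and then evaluating at the boundary $\gamma^*=2$; the paper treats this last step as immediate, while you supply the justification it leaves implicit, namely that $\partial_x g_\gamma(x_\gamma^*)=2/\gamma-\log(x_\gamma^*)<0$ and $\partial_\gamma g_\gamma(x)=-(2/\gamma^2)(x-1)$ give, via the implicit function theorem, that $x_\gamma^*$ is continuous and decreasing, so $\gamma\mapsto\gamma/(x_\gamma^*-1)$ is strictly increasing on $(0,2)$ and its supremum is the continuous boundary value $2/(x_2^*-1)\approx 0.51$. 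This added monotonicity/continuity argument is sound and exactly what is needed to turn the paper's ``immediately obtained'' into a proof, so there is no gap.
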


\begin{rem}
The storage capacity thus obtained shows that our new version of the BEG model outperforms the models discussed in \cite{GHLV16}.
We illustrate the corresponding capacities in Figure \ref{fig1}.
\end{rem}
\begin{figure}
\includegraphics[scale=0.2]{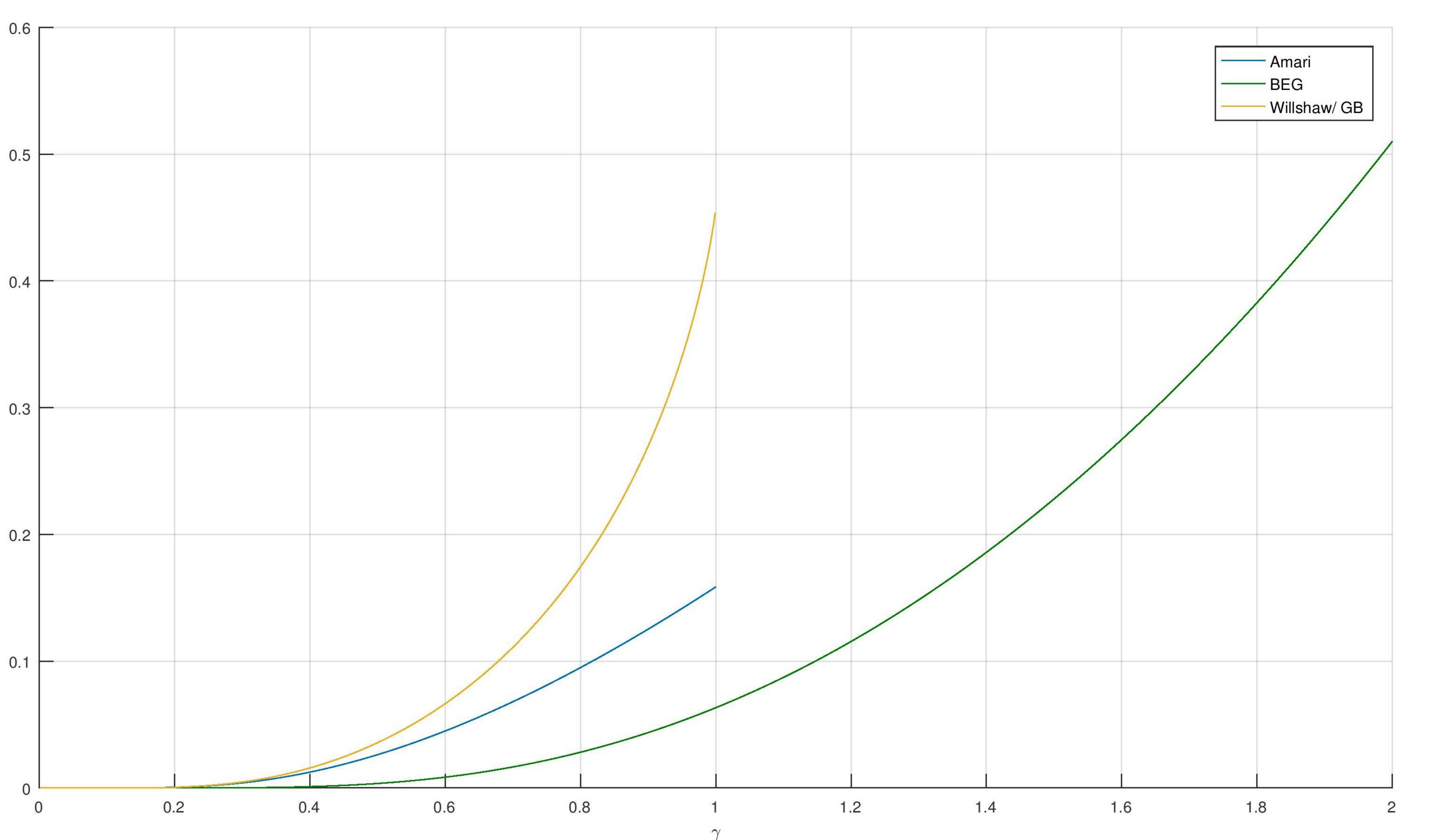}\caption{The storage capacities of sparse associative memories in dependency of $\gamma$ (GB is the model by Gripon and Berrou)}\label{fig1}
\end{figure}

\bibliographystyle{abbrv}

\bibliography{LiteraturDatenbank}
\end{document}